\numberwithin{equation}{section}
\newtheorem{theorem}{Theorem}[section]
\newtheorem{proposition}[theorem]{Proposition}
\newtheorem{lemma}[theorem]{Lemma}
\newtheorem{coro}[theorem]{Corollary}
\newtheorem{definition}[theorem]{Definition}
\newtheorem{example}[theorem]{Example}
\title[Derivation module and Hilbert-Kunz multiplicity]{Derivation module and the 
Hilbert-Kunz multiplicity of the co-ordinate ring of a projective monomial curve}
\author{Om Prakash Bhardwaj}
\address{IIT Gandhinagar, Palaj, Gandhinagar, Gujarat-382355 India}
\email{om.prakash@iitgn.ac.in}
\thanks{}
\author{Indranath Sengupta}
\address{IIT Gandhinagar, Palaj, Gandhinagar, Gujarat-382355 India}
\email{indranathsg@iitgn.ac.in} 
\thanks{}
\keywords{Numerical semigroup, affine semigroup, semigroup ring, derivation module, Hilbert-Kunz multiplicity.}
\subjclass{13N15, 13D40, 20M25.}   
\begin{document}
\begin{abstract}
Let $n_0, n_1, \ldots, n_p$ be a sequence of positive integers such that $n_0 < n_1 < \cdots < n_p$ and $\mathrm{gcd}(n_0,n_1, \ldots,n_p) = 1$. Let $S =  \langle (0,n_p), (n_0,n_p-n_0),\ldots,(n_{p-1},n_p-n_{p-1}), (n_p,0) \rangle$ be an affine semigroup in $\mathbb{N}^2$.  The semigroup ring $k[S]$ is the co-ordinate ring of the  projective monomial curve in the projective space $\mathbb{P}_k^{p+1}$, which is defined parametrically by
\begin{center}
$x_0 = v^{n_p}, \quad x_1 = u^{n_0}v^{n_p-n_0},\quad  \ldots , \quad x_p= u^{n_{p-1}}v^{n_p-n_{p-1}}, \quad x_{p+1} = u^{n_p}$. 
\end{center}
In this article, we consider the case when $n_0, n_1, \ldots, n_p$ forms an arithmetic sequence, and give an explicit set of minimal generators for the derivation module $\mathrm{Der}_k(k[S])$. Further, we give an explicit formula for the Hilbert-Kunz multiplicity of the co-ordinate ring of a projective monomial curve.
\end{abstract}
\maketitle

\section{Introduction}
Let $k$ be a field of characteristic 0, and $(R,\mathfrak{m})$ be a local (graded) $k$-algebra. Finding an explicit set of minimal generators for the derivation module $\mathrm{Der}_k(R)$ of $(R,\mathfrak{m})$ is an important problem in the literature, where $\mathrm{Der}_k(R)$ denote the $R$-module of $k$-derivations of $R$. Previously, this problem has been studied by many authors, for reference see ( \cite{kraft}, \cite{patil-sengupta}, \cite{patil-singh}, \cite{seidenberg}, \cite{jonathan}).

Let $r \geq 1$, and $S$ be an affine semigroup in $\mathbb{N}^r$ generated by $a_0,a_1, \ldots, a_p$. The semigroup ring $k[S]:=\oplus_{s \in S} k {\bf t}^s $ of $S$ is a $k$-subalgebra of the polynomial ring $k[t_1,\ldots,t_r]$, where $t_1,\ldots,t_r$ are indeterminates and ${\bf t}^s = \prod_{i=1}^r t_i^{s_i}$ for all $s = (s_1,\ldots,s_r) \in S$. If $r =1$, then $S$ is a submonoid in $\mathbb{N}$, and the semigroup ring $k[S]$ is isomorphic to a numerical semigroup ring. When $S$ is a numerical semigroup, Kraft in \cite{kraft}, proved that the derivation module $\mathrm{Der}_k(k[S])$ is minimally generated by the set $\{t^{\alpha+1} \frac{\partial}{\partial t} \mid \alpha \in \mathrm{PF}(S) \cup \{0\}\}$, where $\mathrm{PF}(S)$ denotes the set of pseudo-Frobenius elements of $S$. For $r \geq 2$, if $S$ is an affine semigroup in $\mathbb{N}^r$, then Tamone and Molinelli (\cite{tamone}, \cite{tamone2}), give the structure of $k$-derivations of $k[S]$ for some special type of semigroups. In \cite{tamone}, they consider the affine semigroup $S =  \langle (0,n_p), (n_0,n_p-n_0),\ldots,(n_{p-1},n_p-n_{p-1}), (n_p,0) \rangle$ in $\mathbb{N}^2$, where $n_0, n_1, \ldots, n_p$ is a sequence of positive integers such that $n_0 < n_1 < \cdots < n_p$ and $\mathrm{gcd}(n_0,n_1, \ldots ,n_p) = 1$. For $i = 1,2$, let $S_1$ and $S_2$ be the natural projections onto first and second component of $S$. With the assumption on the generators, note that $S_1$ and $S_2$ are numerical semigroups. For these type of affine semigroups, when $k[S]$ is Cohen-Macaulay, they give the structure of the derivations of the derivation module $\mathrm{Der}_k(k[S])$ using the set of pseudo-Frobenius elements of $S_1$ and $S_2$ (see \ref{genderk}). In this article, we give the explicit generators of the derivation module of the co-ordinate ring of the projective monomial curve defined by an arithmetic sequence using the structures of derivations given by Tamone and Molinelli \cite{tamone}.

Now, we summarize the contents of the paper. In this article, we consider the affine semigroup $S =  \langle (0,n_p), (n_0,n_p-n_0),\ldots,(n_{p-1},n_p-n_{p-1}), (n_p,0) \rangle$ in $\mathbb{N}^2$, where $n_0, n_1, \ldots, n_p$ is a sequence of positive integers such that $n_0 < n_1 < \cdots < n_p$ and $\mathrm{gcd}(n_0,n_1, \ldots ,n_p) = 1$, and $k[S]$ the semigroup algebra associated to $S$, which is isomorphic to the co-ordinate ring of a projective monomial curve in $\mathbb{P}_k^{p+1}$.

In section 2, we recall some definitions about numerical semigroups and when $k[S]$ is Cohen-Macaulay, we summarize the structure of the generating set of the derivation module of $k[S]$ in Theorem \ref{genderk}. In section 3, we consider $p=1$, i.e the sequence of positive integers $n_0, n_1$ such that $n_0 < n_1$ and $\mathrm{gcd}(n_0,n_1)=1$. In Theorem \ref{2-generated}, we give the explicit set of minimal generators of the derivation module for the co-ordinate ring of the projective monomial curve in $\mathbb{P}_k^2$ defined by the positive integers $n_0$ and $n_1$.

In section 4, we consider and the sequence $n_0,n_1,n_2, \ldots,n_p$, which forms a minimal arithmetic sequence and consider the affine semigroup $S = \langle (0,n_p), (n_0,n_p-n_0), (n_1,n_p-n_1),\ldots, (n_{p-1},n_p-n_{p-1}), (n_p,0) \rangle $. From \cite{bermejo-isabel}, we know that $k[S]$ is Cohen-Macaulay. In Proposition \ref{mu=type+3}, we prove that $\mu(\mathrm{Der}_k(k[S]_{\mathfrak{m}}))= r+3$, where $r$ is the Cohen-Macaulay type of $k[S]$ and $\mathfrak{m}$ is the maximal homogeneous ideal of $k[S]$. In Corollary \ref{mu}, for $n_0 = ap+b$, $0 \leq b < p$, we write the formula for $\mu(\mathrm{Der}_k(k[S]_{\mathfrak{m}}))$, which is,
\[\mu(\mathrm{Der}_k(k[S]_{\mathfrak{m}}))  = \begin{cases} 
 4 & \text{if} \quad p=1;\\
 p+2 & \text{if} \quad p \geq 2, b =0; \\
 p+3 & \text{if} \quad p \geq 2, b =1; \\
 b+2 &  \text{if} \quad p \geq 2, 1 < b < p.
       \end{cases}
    \]
In Theorem \ref{Der(p+1)}, we give an explicit set of minimal generators for $\mathrm{Der}_k (k[S])$. In section 5, we compute the Hilbert-Kunz multiplicity of the semigroup algebra $k[S]$. In Theorem \ref{Hilbert-KunzMulti}, we prove that the Hilbert-Kunz multiplicity of $k[S]$ is equal to  $1 + \frac{1}{n_p}\left( \sum_{r=1}^p (n_r-1)(n_r-n_{r-1}) \right)$, where $n_0 = 0$. It is interesting to note that in the case of semigroup algebras, the computation of Hilbert-Kunz multiplicity is independent of the characteristic of the base field.

\section{Preliminaries}
Throughout the article, $\mathbb{Z}$ and $\mathbb{N}$ denote the sets of integers and non-negative integers respectively.
\begin{definition}
Let $S$ be a submonoid of $\mathbb{N}$ such that $\mathbb{N} \setminus S$ is finite, then $S$ is called a numerical semigroup. Equivalently, there exist $n_0,\ldots,n_p \in \mathbb{N}$ such that $\mathrm{gcd}(n_0,\ldots,n_p) = 1$ and 
\[
S =  \langle n_0, \ldots, n_p \rangle = \left\lbrace \sum_{i=0}^{p} \lambda_i n_i \mid \lambda_i \in \mathbb{N}, \forall i \right\rbrace.
\]
\end{definition}

Since $\mathbb{N}\setminus S$ is finite, the largest number in $\mathbb{N}\setminus S$ is called the Frobenius number of $S$, and it is denoted by $F(S)$. 

\begin{definition}
Let $S$ be a numerical semigroup. For any $s \in S$, if $s = \sum_{i=0}^p \lambda_in_i$ is the unique expression for $s$ in $S$, then we say $s$ has unique factorization in $S$. In other words, we say $s$ has a unique factorization in $S$ if given any two expressions of $s$, $s = \sum_{i=0}^p \lambda_in_i$ and $s = \sum_{i=0}^p \lambda_i'n_i$, we have $\lambda_i = \lambda_i'$ for all $i \in [0,p]$.
\end{definition}

Given $0 \neq s \in S$, the set of lengths of $s$ in $S$ is defined as
\[
\mathcal{L}(s) = \left\lbrace \sum_{i=0}^{p} \lambda_i  \mid s = \sum_{i=0}^p\lambda_in_i, \quad \lambda_i \in \mathbb{N} \right\rbrace.
\]

\begin{definition}
A subset $T \subseteq  S$ is called homogeneous if either it is empty or $\mathcal{L}(s)$ is singleton for all $0 \neq s \in T$.
\end{definition}

\begin{definition}
Let $S$ be a numerical semigroup and $a$ be a non-zero element of $S$. The
set $\mathrm{Ap}(S, a) = \{s \in S \mid s-a \notin S \}$ is called the Ap\'{e}ry set of $S$ with respect to $a$.
\end{definition}

\begin{definition}
Let $S$ be a numerical semigroup. An element $f \in \mathbb{Z} \setminus S$ is called a pseudo-Frobenius number if $f + s \in S$ for all $s \in S\setminus \{0\}$. The set of pseudo-Frobenius numbers of $S$ is denoted by $\mathrm{PF}(S)$. Note that $F(S) \in \mathrm{PF}(S)$ and $F(S)$ is the maximum element of $\mathrm{PF}(S)$.
\end{definition}

The cardinality of the set of pseudo-Frobenius elements is known as the type of the numerical semigroup $S$, which is equal to the Cohen-Macaulay type of the numerical semigroup ring $k[S]$. Let $a_0,a_1, \ldots , a_p \in \mathbb{N}^r$ then
\[S = \langle a_0,a_1, \ldots ,a_p \rangle = \left\lbrace \sum_{i=0}^{p} \lambda_i a_i \mid \lambda_i \in \mathbb{N}, \forall i \right\rbrace\]
is called an \textit{affine semigroup} generated by  $a_0,a_1, \ldots,  a_p$. For $r=1$, affine semigroups correspond to numerical semigroups. Let $k$ be a field, the semigroup ring $k[S]:=\oplus_{s \in S} k {\bf t}^s $ of $S$ is a $k$-subalgebra of the polynomial ring $k[t_1,\ldots,t_r]$, where $t_1,\ldots,t_r$ are indeterminates and ${\bf t}^s = \prod_{i=1}^r t_i^{s_i}$, for all $s = (s_1,\ldots,s_r) \in S$. The semigroup ring $k[S] = k[{\bf t}^{a_0},{\bf t}^{a_1}, \ldots, {\bf t}^{a_p}]$ of $S$ can be represented as a quotient of a polynomial ring using a canonical surjection $\pi : k[x_0,x_1,\ldots,x_p] \rightarrow k[S]$, given by $\pi(x_i) = {\bf t}^{a_i}$ for all $i=0,1,\ldots,p.$ 
\medskip

Let $n_0,n_1, \ldots, n_p$ be a sequence of positive integers such that $n_0 < n_1 < \cdots < n_p$. Let $\mathcal{C}$ be a projective monomial curve in the projective space $\mathbb{P}_K^{p+1}$, defined parametrically by
\begin{center}
$x_0 = u^{n_p}; \quad x_1 = t^{n_0}u^{n_p-n_0}; \quad  \ldots \quad ; \quad  x_p = t^{n_{p-1}}u^{n_p - n_{p-1}}; \quad x_{p+1} = t^{n_p}$.
\end{center}

Let $k[\mathcal{C}]$ denote the co-ordinate ring of $\mathcal{C}$. Then 
$k[\mathcal{C}] = k[S]$, where $S = \langle (0,n_p), (n_0,n_p-n_0), (n_1,n_p-n_1),\ldots, (n_{p-1},n_p-n_{p-1}), (n_p,0) \rangle$ is an affine semigroup in $\mathbb{N}^2$. For such affine semigroup rings, we recall 
the following theorem from \cite{tamone}, which gives a set of generators of the derivation module $\mathrm{Der}_k(k[S])$.

\begin{theorem}\cite{tamone}\label{genderk}
Let $S = \langle (0,n_p), (n_0,n_p-n_0), (n_1,n_p-n_1),\ldots, (n_{p-1},n_p-n_{p-1}), (n_p,0) \rangle$ be an affine semigroup in $\mathbb{N}^2$, where $n_0 < n_1 < \cdots < n_p$ and $\mathrm{gcd}(n_0,n_1, \ldots,n_p) = 1$. Let $S_1$ and $S_2$ be the numerical semigroups corresponding the natural projections to the first and second components of $S$ respectively. If the semigroup ring $k[S]$ is Cohen-Macaulay then the derivation module $\mathrm{Der}_k(k[S])$ is generated by $D_1 \cup \{u \frac{\partial}{\partial u}\} \cup D_2 \cup \{ t \frac{\partial}{\partial t} \}$, where $D_{1}$ and $D_{2}$ are defined below. 
\begin{enumerate}
\item If $S_2 \neq \mathbb{N}$, then $D_1 = \{t^{\beta}u^{\alpha +1}\frac{\partial}{\partial u} \mid \alpha \in \mathrm{PF}(S_2)\}$, and $\beta$ is the least positive integer such that the pair $(\beta,\alpha)$ satisfy
\[
(\beta,\alpha)+(n,n_p-n) \in S \quad \text{for each} \quad
n \in \{0,n_0,\ldots,n_{p-1}\}.
\]

\item If $S_2 = \mathbb{N}$, then $D_1 = \{t^{1+cn_p}\frac{\partial}{\partial u}\}$, and $c$ is the least non-negative integer such that the pair $(1+cn_p,-1)$ satisfies
\[
(1+cn_p,-1)+(n,n_p-n) \in S \quad \text{for each} \quad
n \in \{0,n_0,\ldots,n_{p-1}\}.
\]

\item If $S_1 \neq \mathbb{N}$, then $D_2 = \{t^{\delta+1}u^{\gamma}\frac{\partial}{\partial t} \mid \delta \in \mathrm{PF}(S_1)\}$, and $\gamma$ is the least positive integer such that the pair $(\delta,\gamma)$ satisfy
\[
(\delta,\gamma)+(n,n_p-n) \in S \quad \text{for each} \quad
n \in \{n_0,\ldots,n_p\}.
\]

\item If $S_1 = \mathbb{N}$, then $D_2 = \{u^{1+en_p}\frac{\partial}{\partial t}\}$, and $e$ is the least non-negative integer such that the pair $(-1,1+en_p)$ satisfies
\[
(-1,1+en_p)+(n,n_p-n) \in S \quad \text{for each} \quad
n \in \{n_0,\ldots,n_p\}.
\]
\end{enumerate}
\end{theorem}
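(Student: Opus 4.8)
The plan is to exploit the fine $\mathbb{Z}^2$-grading on $k[S]=\bigoplus_{s\in S}k\,t^{s_1}u^{s_2}$ and to reduce to homogeneous derivations. Since $\mathrm{Der}_k(k[S])$ is a $\mathbb{Z}^2$-graded $k[S]$-module, it suffices to produce every homogeneous derivation from the proposed set. A homogeneous derivation $D$ of degree $v=(v_1,v_2)$ satisfies $D(t^{s_1}u^{s_2})=\phi(s)\,t^{s_1+v_1}u^{s_2+v_2}$ for some $\phi\colon S\to k$, and the Leibniz rule forces $\phi$ to be additive. As $\mathrm{char}\,k=0$ and $\mathbb{Z}S$ has finite index in $\mathbb{Z}^2$, $\phi$ extends to a $\mathbb{Z}$-linear functional $\phi(a,b)=\lambda_1 a+\lambda_2 b$. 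Hence every homogeneous derivation has the normal form
\[
D \;=\; \lambda_1\,t^{v_1+1}u^{v_2}\tfrac{\partial}{\partial t} \;+\; \lambda_2\,t^{v_1}u^{v_2+1}\tfrac{\partial}{\partial u},
\]
and $D$ preserves $k[S]$ precisely when $a_i+v\in S$ for each generator $a_i$ of $S$ with $\phi(a_i)\neq 0$. In degree $v=0$ no constraint arises and the component is spanned by $t\frac{\partial}{\partial t}$ and $u\frac{\partial}{\partial u}$, which accounts for those two generators.

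Next I would split the nonzero-degree analysis according to $\phi$. The generators with nonvanishing second coordinate are exactly the $(n,n_p-n)$ for $n\in\{0,n_0,\dots,n_{p-1}\}$, and those with nonvanishing first coordinate are the $(n,n_p-n)$ for $n\in\{n_0,\dots,n_p\}$. When $\lambda_1=0$ (a pure $\frac{\partial}{\partial u}$-derivation) the membership condition reads $(v_1,v_2)+(n,n_p-n)\in S$ for $n\in\{0,n_0,\dots,n_{p-1}\}$, which is precisely the defining condition for $D_1$; symmetrically $\lambda_2=0$ gives the condition for $D_2$. For a genuinely mixed $\phi$ (both $\lambda_i\neq 0$), the linear functional vanishes on at most one of the generators, so membership forces $a_i+v\in S$ for all generators except possibly one interior generator. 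Here I would invoke the hypothesis that $k[S]$ is Cohen-Macaulay: for a simplicial affine semigroup whose cone is spanned by $(0,n_p)$ and $(n_p,0)$, Cohen-Macaulayness is equivalent to a combinatorial condition controlling the holes of $S$ inside its saturation (an $(S_2)$-type gluing condition on the two extremal Ap\'{e}ry sets). Using it, I would show that once the two extremal shifts $v+(0,n_p)$ and $v+(n_p,0)$ already lie in $S$, the remaining interior failure can be resolved, so that $D$ decomposes over $k[S]$ into a combination of a pure $\frac{\partial}{\partial t}$- and a pure $\frac{\partial}{\partial u}$-derivation of the same degree. This reduces the whole problem to the two pure families.

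For the pure $\frac{\partial}{\partial u}$-family I would then make the link with $\mathrm{PF}(S_2)$ explicit. Projecting the membership condition to the second coordinate gives $\alpha+(n_p-n)\in S_2$ for every generator $n_p-n$ of $S_2$; since generators generate, this is equivalent to $\alpha+s\in S_2$ for all $s\in S_2\setminus\{0\}$, i.e. $\alpha\in S_2$ or $\alpha\in\mathrm{PF}(S_2)$. The case $\alpha\in S_2$ contributes nothing beyond $u\frac{\partial}{\partial u}$, while each $\alpha\in\mathrm{PF}(S_2)$ yields the corresponding strand in $D_1$; choosing $\beta$ least positive records the bottom of that strand, the remaining valid degrees being obtained by multiplying the listed derivations by suitable $t^{n}u^{n_p-n}\in k[S]$. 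The same argument with the two coordinates interchanged produces $D_2$ from $\mathrm{PF}(S_1)$. The degenerate cases $S_2=\mathbb{N}$ and $S_1=\mathbb{N}$ fall out of the identical computation with the convention $\mathrm{PF}(\mathbb{N})=\{-1\}$, giving the stated forms $t^{1+cn_p}\frac{\partial}{\partial u}$ and $u^{1+en_p}\frac{\partial}{\partial t}$. These coordinatewise reductions are modelled on Kraft's description of $\mathrm{Der}_k$ for numerical semigroup rings \cite{kraft} and become routine once the normal form above is in place.

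I expect the main obstacle to be the mixed case together with the passage from ``valid on generators'' to ``generated over $k[S]$''. Concretely, showing that a mixed homogeneous derivation cannot survive independently of the two pure families once the extremal shifts lie in $S$ is exactly where the Cohen-Macaulay hypothesis is indispensable, since it is what pins down the interior holes of $S$; and the $k[S]$-module bookkeeping against the translates $(n,n_p-n)$ needed to confirm that the listed derivations already span every valid homogeneous degree is the delicate remaining point.
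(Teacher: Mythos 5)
First, note that the paper does not prove this statement at all: Theorem \ref{genderk} is quoted verbatim from Molinelli--Tamone \cite{tamone}, so there is no in-paper proof to match your attempt against; I am judging the proposal on its own. Your skeleton is the right one (it is essentially the Kraft/Molinelli--Tamone route): the $\mathbb{Z}^2$-grading, the normal form $D=\lambda_1 t^{v_1+1}u^{v_2}\frac{\partial}{\partial t}+\lambda_2 t^{v_1}u^{v_2+1}\frac{\partial}{\partial u}$ for a homogeneous derivation via additivity of $\phi$ and divisibility of $(k,+)$ in characteristic $0$, and the projection of the membership conditions onto $\mathrm{PF}(S_2)$ and $\mathrm{PF}(S_1)$ are all correct.

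However, the two places you yourself flag as ``I would show'' are exactly where the content of the theorem sits, and as written they are gaps, not proofs. (a) In the mixed case you must show that if $\phi$ kills one interior generator $a_j$ and $v+a_i\in S$ for all $i\neq j$, then in fact $v+a_j\in S$ as well; otherwise the degree-$v$ component of the module generated by your pure families is at most one-dimensional and cannot contain the mixed $D$. (b) For a fixed $\alpha\in\mathrm{PF}(S_2)$ you must show that the set of valid first coordinates is exactly $\beta+n_p\mathbb{N}$ with $\beta$ minimal (note that two valid degrees with the same $\alpha$ can only differ by a multiple of $(n_p,0)$ inside $S$), and you must also justify the throwaway claim that $\alpha\in S_2$ ``contributes nothing beyond $u\frac{\partial}{\partial u}$,'' which amounts to showing that a valid degree $(v_1,v_2)$ with $v_2\in S_2$ actually lies in $S$. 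All three points are closed by one precise input that you allude to but never state: for these semigroups, Cohen--Macaulayness of $k[S]$ is equivalent to $S=(S_1\times S_2)\cap G(S)$, where $G(S)$ is generated by $(0,n_p)$ and $(1,-1)$ (Cavaliere--Niesi \cite{cavaliereNiesi}, the same lemma this paper invokes in the proof of Theorem \ref{Der(p+1)}). With it, membership in $G(S)$ becomes a congruence modulo $n_p$, the coordinatewise conditions reduce to $v_1\in S_1$ and $v_2+n_p-n\in S_2$, the exceptional generator in (a) is recovered from $v_1\in S_1$ and $v_2\in S_2$, and the minimal $\beta$ in (b) is identified as the Ap\'ery element of $S_1$ in the class of $-\alpha$ modulo $n_p$, so that every valid $v_1$ is $\beta+\lambda n_p$. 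Without naming and using this criterion, the ``vague $(S_2)$-type gluing condition'' does not give you the decomposition of a mixed derivation or the generation statement, so the proposal as it stands is an outline rather than a proof.
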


\section{Derivations in $\mathbb{P}_k^2$}

In this section, we give the explicit set of minimal generators of the derivation module for the co-ordinate ring of a projective monomial curve defined by the positive integers 
$n_0$ and $n_1$, such that $n_0 < n_1$ and $\mathrm{gcd}(n_0,n_1) = 1$.

\begin{proposition}\label{2-generated}
Let $S = \langle (0,n_1), (n_0,n_1-n_0), (n_1,0) \rangle$ be an affine semigroup 
in $\mathbb{N}^2$, such that $ n_0 < n_1$ and $\mathrm{gcd}(n_0,n_1) = 1$. Then we have the following:
\begin{enumerate}
\item If $S_1, S_2 \neq \mathbb{N}$, then the derivation module $\mathrm{Der}_k(k[S])$ is mimimally generated by
\[
\left\lbrace t \frac{\partial}{\partial t}, t^{n_0(n_1-1)-n_1+1}u^{(n_1-1)(n_1-n_0)}\frac{\partial}{\partial t}, u \frac{\partial}{\partial u}, t^{n_0(n_1-1)}u^{(n_1-1)(n_1-n_0)-n_1+1}\frac{\partial}{\partial u} \right\rbrace .
\]

\item If $S_1 = \mathbb{N}$ and $S_2 \neq \mathbb{N}$, then the derivation module $\mathrm{Der}_k(k[S])$ is mimimally generated by
\[
\left\lbrace t \frac{\partial}{\partial t}, u^{1+(n_1-2)n_1}\frac{\partial}{\partial t}, u \frac{\partial}{\partial u}, t^{n_1-1}u^{(n_1-1)(n_1-2)}\frac{\partial}{\partial u} \right\rbrace .
\]

\item If $S_1 \neq \mathbb{N}$ and $S_2 = \mathbb{N}$, then the derivation module $\mathrm{Der}_k(k[S])$ is mimimally generated by
\[
\left\lbrace t \frac{\partial}{\partial t}, t^{n_0(n_1-1)-n_1+1}u^{n_1-1}\frac{\partial}{\partial t}, u \frac{\partial}{\partial u}, t^{n_0(n_1-1)}\frac{\partial}{\partial u} \right\rbrace .
\]

\item If $S_1 = S_2 = \mathbb{N}$, then the derivation module $\mathrm{Der}_k(k[S])$ is mimimally generated by
\[
\left\lbrace t \frac{\partial}{\partial t}, u\frac{\partial}{\partial t}, u \frac{\partial}{\partial u}, t\frac{\partial}{\partial u} \right\rbrace .
\]
\end{enumerate}
\end{proposition}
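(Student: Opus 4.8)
The plan is to read off each of the four generating sets directly from Theorem \ref{genderk} and then to establish minimality by hand. First I would note that for $p=1$ the curve $\mathcal{C}\subseteq\mathbb{P}^2_k$ is a plane curve: its defining toric ideal is a height-one prime in $k[x_0,x_1,x_2]$ (here $\dim k[S]=2$), hence principal, so $k[S]$ is a hypersurface and in particular Cohen--Macaulay and Theorem \ref{genderk} applies. The two projections are $S_1=\langle n_0,n_1\rangle$ and $S_2=\langle n_1-n_0,n_1\rangle$, each minimally generated by two coprime integers and therefore symmetric of type $1$; thus $\mathrm{PF}(S_1)=\{F(S_1)\}$ with $F(S_1)=n_0n_1-n_0-n_1$, and $\mathrm{PF}(S_2)=\{F(S_2)\}$ with $F(S_2)=(n_1-n_0)n_1-(n_1-n_0)-n_1$. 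The four cases of the statement correspond exactly to whether $n_0=1$ (i.e.\ $S_1=\mathbb{N}$) and whether $n_1-n_0=1$ (i.e.\ $S_2=\mathbb{N}$), so in each case Theorem \ref{genderk} dictates which of the descriptions (1)--(4) of $D_1,D_2$ to use, while the Euler derivations $t\,\partial/\partial t$ and $u\,\partial/\partial u$ are always present.

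The computational core is to evaluate the auxiliary exponents $\beta,\gamma$ (and $c,e$ when a projection equals $\mathbb{N}$). Here I would use the elementary fact that a point $(a,b)\in\mathbb{N}^2$ can lie in $S$ only if $n_1\mid a+b$, $a\in S_1$ and $b\in S_2$, since $a+b$ is a multiple of $n_1$ for every element of $S$. Substituting the pairs $(\beta,\alpha)+(n,n_1-n)$ and $(\delta,\gamma)+(n,n_1-n)$ into these conditions, the divisibility collapses to a single congruence modulo $n_1$ (for instance, in case (1) the requirement on $\beta$ becomes $\beta\equiv-n_0\pmod{n_1}$ together with $\beta\in S_1$), while the second-coordinate memberships are automatic because $\alpha=F(S_2)$ and one is adding a generator of $S_2$. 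Solving the congruence via the Apéry description $\mathrm{Ap}(S_1,n_1)=\{0,n_0,2n_0,\dots,(n_1-1)n_0\}$ and its analogue for $S_2$ pins down the least admissible exponent in each case; for instance $\beta=n_0(n_1-1)$ and $\gamma=(n_1-1)(n_1-n_0)$ in case (1), the governing parameter equals $n_1-2$ with companion exponent $n_1-1$ in the mixed cases (2) and (3), and $c=e=0$ in case (4). To confirm that the minimal candidate genuinely lands in $S$ (not merely satisfies the necessary conditions), I would exhibit an explicit factorization of each resulting point, e.g.\ $(\beta,\alpha+n_1)=(n_1-1)(n_0,n_1-n_0)\in S$. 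Feeding these exponents back into Theorem \ref{genderk} reproduces the four monomials in each of (1)--(4).

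For minimality I would use the $\mathbb{Z}^2$-grading of $k[t,u]$, under which $\mathrm{Der}_k(k[S])$ is graded and all four listed derivations are homogeneous; moreover a homogeneous element of $k[S]_{\mathfrak{m}}$ of bidegree $(a,b)$ is nonzero only when $(a,b)\in S$, because the degree-zero part of $k[S]$ is just $k$. By the graded Nakayama lemma it suffices to show that no one of the four generators lies in the $k[S]_{\mathfrak{m}}$-submodule generated by the other three. Writing such a putative relation and comparing, separately, the coefficients of $\partial/\partial t$ and of $\partial/\partial u$, each case reduces to asking whether a single explicit Laurent monomial lies in $k[S]$. For the two ``long'' derivations the relevant monomial has a coordinate equal to $F(S_1)$ or $F(S_2)$, which is not in the corresponding numerical semigroup; for the two Euler derivations the monomial has a strictly negative exponent. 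In either situation the monomial is not in $k[S]$, so the relation cannot hold, the four generators are irredundant, and hence they form a minimal generating set.

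The main obstacle I anticipate is not a single hard step but the bookkeeping: verifying each least-exponent claim means correctly translating the lattice conditions of Theorem \ref{genderk} into congruences and then confirming minimality of the solution against the Apéry sets, while making sure that the necessary conditions used to bound the exponents from below are matched by explicit factorizations realizing those bounds. A secondary point needing care is the passage to $k[S]_{\mathfrak{m}}$ in the minimality argument: one must observe that, because every generator is $\mathbb{Z}^2$-homogeneous and $(k[S])_{(0,0)}=k$, testing whether the relevant homogeneous coefficients lie in $k[S]_{\mathfrak{m}}$ is equivalent to testing whether the underlying monomials lie in $k[S]$, which is precisely what makes the Frobenius and negative-exponent obstructions decisive.
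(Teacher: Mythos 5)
Your proposal is correct and takes essentially the same route as the paper: both reduce the statement to Theorem \ref{genderk}, identify $\mathrm{PF}(S_1)$ and $\mathrm{PF}(S_2)$ as the singletons $\{n_0(n_1-1)-n_1\}$ and $\{(n_1-1)(n_1-n_0)-n_1\}$, and determine the auxiliary exponents $\beta,\gamma,c,e$ by first bounding them below with necessary membership conditions and then realizing the bound by an explicit factorization such as $(\beta,\alpha+n_1)=(n_1-1)(n_0,n_1-n_0)$. The only organizational difference in the computation is how the lower bound is extracted: the paper uses the unique factorization of $(n_1-1)(n_1-n_0)$ in $S_2$ (resp.\ of $n_0(n_1-1)$ in $S_1$) to enumerate all factorizations of the relevant lattice point, whereas you use the congruence $n_1\mid a+b$ together with the Ap\'ery set $\mathrm{Ap}(S_1,n_1)=\{0,n_0,\dots,(n_1-1)n_0\}$; these are equivalent. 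You do supply two items the paper's proof leaves implicit, both correctly: a reason why $k[S]$ is Cohen--Macaulay when $p=1$ (it is a hypersurface), which must be checked before Theorem \ref{genderk} applies, and a direct irredundancy argument via the $\mathbb{Z}^2$-grading and graded Nakayama, reducing to the facts that $F(S_1)\notin S_1$, $F(S_2)\notin S_2$, and that the Euler derivations would require monomials with negative exponents; the paper instead relies on the count $\mu=r_1+r_2+2=4$ from \cite[Corollary 3.5]{tamone}. Your version is the more self-contained of the two.
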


\begin{proof}
Let $S_1$ and $S_2$ be the projections of $S$ to the first and the 
second component of $S$, then we have $S_1 = \langle n_0,n_1 \rangle$ 
and $S_2 = \langle n_1-n_0, n_1 \rangle$. We will prove each case separately by using Theorem \ref{genderk}.
\medskip

\noindent\textbf{Case 1.} Suppose $S_1 , S_2 \neq \mathbb{N}$. From \cite[Proposition 2.13]{numerical}, we have $\mathrm{PF}(S_1) = \{n_0(n_1-1)-n_1\}$ and $\mathrm{PF}(S_2) = \{(n_1-1)(n_1-n_0)-n_1\}$. Let $\beta \in S_1$ be such that $(\beta, (n_1-1)(n_1-n_0)-n_1)+(0,n_1) = (\beta, (n_1-1)(n_1-n_0)) \in S$. Note that $(n_1-1)(n_1-n_0)$ has only one factorization in $S_2$. Therefore, 
the possible factorization of $(\beta, (n_1-1)(n_1-n_0))$ in $S$ will be
\[
(\beta, (n_1-1)(n_1-n_0)) = (n_1-1)(n_0,n_1-n_0) + \lambda (n_1,0), \quad \text{for some} \quad \lambda \geq 0.
\]
Therefore, we have $\beta = (n_1-1)n_0 + \lambda n_1 \geq n_0(n_1-1)$. Now for $\beta = n_0(n_1-1)$, we have 
\begin{align*}
(n_0(n_1-1),(n_1-1)(n_1-n_0)-n_1) + (0,n_1) & = (n_0(n_1-1),(n_1-1)(n_1-n_0))\\
&=(n_1-1)(n_0,n_1-n_0) 
\end{align*}
and
\begin{align*}
(n_0(n_1-1),(n_1-1)(n_1-n_0)-n_1) + (n_0,n_1-n_0) & = (n_0n_1, n_1(n_1-n_0-1))\\
&= n_0(n_1,0)+(n_1-n_0-1)(0,n_1).
\end{align*}
Now, let $\gamma \in S_2$ be such that $(n_0(n_1-1)-n_1,\gamma) + (n_1,0) = (n_0(n_1-1), \gamma) \in S$. Note that $n_0(n_1-1)$ has only one factorization in $S_1$. Therefore, the 
only possible factorization of $(n_0(n_1-1), \gamma)$ in $S$ is
\[
(n_0(n_1-1), \gamma) = (n_1-1)(n_0,n_1-n_0) + \lambda(0,n_1) \quad \text{for \, some} \quad \lambda \geq 0.
\]
Hence, we have $\gamma = (n_1-1)(n_1-n_0) + \lambda n_1 \geq (n_1-1)(n_1-n_0)$. Now for $\gamma = (n_1-1)(n_1-n_0)$, we have 
\begin{align*}
(n_0(n_1-1)-n_1,(n_1-1)(n_1-n_0)) + (n_0,n_1-n_0) & = (n_0n_1-n_1, n_1(n_1-n_0))\\
&= (n_0-1)(n_1,0)+(n_1-n_0)(0,n_1);
\end{align*}
and
\begin{align*}
(n_0(n_1-1)-n_1,(n_1-1)(n_1-n_0)) + (n_1,0) & = (n_0(n_1-1),(n_1-1)(n_1-n_0))\\
&= (n_1-1)(n_0,n_1-n_0).
\end{align*}

\noindent \textbf{Case 2.} Suppose $S_1 = \mathbb{N}$. Therefore, we must have $n_0 = 1$. 
Let $e$ be a non-negative integer such that $(-1,1+en_1) + (n_1,0) = (n_1-1,1+en_1) \in S$. Observe that $n_1-1$ has only one factorization in $S_1$. Therefore the only possible 
factorization of $(n_1-1,1+en_1)$ in $S$ is 
\[
(n_1-1,1+en_1) = (n_1-1)(1,n_1-1)+ \lambda(0,n_1), \quad \text{for \, some} \quad \lambda \geq 0. 
\]
Therefore, we have $1+en_1 \geq (n_1-1)^2$, which 
implies that $e \geq n_1-2$. Now for $e = n_1-2$, we have
\begin{align*}
(-1,1+(n_1-2)n_1)+(n_1,0) = (n_1-1,(n_1-1)^2) = (n_1-1)(1,n_1-1);
\end{align*}
and
\begin{align*}
(-1,1+(n_1-2)n_1)+(1,n_1-1) = (0,n_1(n_1-1)) = (n_1-1)(0,n_1).
\end{align*}

\noindent\textbf{Case 3.} Suppose $S_2=\mathbb{N}$. Therefore we must have $n_1-n_0 = 1$. 
Let $c$ be a non-negative integer such that $(1+cn_1,-1) + (0,n_1) = (1+cn_1,n_1-1) \in S$. Observe that $n_1-1$ has only one factorization in $S_2$.  Therefore the only 
possible factorization of $(1+cn_1,n_1-1)$ in $S$ is 
\[
(1+cn_1,n_1-1) = (n_1-1)(n_0,1)+ \lambda(n_1,0), \quad \text{for \, some} \quad \lambda \geq 0. 
\]
Therefore, we have $1+cn_1 \geq (n_1-1)n_0 = n_1(n_0-1)+1$. This implies that 
$c \geq n_0-1$. Now for $c = n_0-1$, we have
\begin{align*}
(1+(n_0-1)n_1,-1)+(0,n_1) = (1+(n_0-1)(n_0+1),(n_1-1)) = (n_1-1)(n_0,1);
\end{align*}
and
\begin{align*}
(1+(n_0-1)n_1,-1)+(n_0,1) &= (n_0n_1,0) = n_0(n_1,0).
\end{align*}

\noindent\textbf{Case 4.} Suppose $S_1=\mathbb{N}=S_2$. In this case, the only possibility is $S = \langle (0,2), (1,1), (2,0) \rangle $. From the arguements of cases 2 and 3, it is easy to observe that the derivation module $\mathrm{Der}_k(k[S])$ is minimally generated by $\left\lbrace t \frac{\partial}{\partial t}, u\frac{\partial}{\partial t}, u \frac{\partial}{\partial u}, t\frac{\partial}{\partial u} \right\rbrace$.
\end{proof}

\begin{example}{\rm 
Let $S = \langle (0,3), (1,2), (3,0) \rangle$. Here $S_1 = \mathbb{N}$ and $S_2 \neq \mathbb{N}$. The derivation module $\mathrm{Der}_k(k[S])$ is minimally generated by $\left\lbrace t \frac{\partial}{\partial t}, u^4\frac{\partial}{\partial t}, u \frac{\partial}{\partial u}, t^2u^2\frac{\partial}{\partial u} \right\rbrace$.
}
\end{example}

\begin{example}{\rm 
Let $S = \langle (0,9), (5,4), (9,0) \rangle$. Here $S_1 \neq \mathbb{N} \neq S_2$. The derivation module $\mathrm{Der}_k(k[S])$ is minimally generated by $\left\lbrace t \frac{\partial}{\partial t}, t^{32}u^{32}\frac{\partial}{\partial t}, u \frac{\partial}{\partial u}, t^{40}u^{24}\frac{\partial}{\partial u} \right\rbrace$.
}
\end{example}

\section{Derivations in $\mathbb{P}_k^{p+1}$}
For a numerical semigroup $S$, it is well known that $\mu(\mathrm{Der}_k(k[S]_{\mathfrak{m}}))= r+1$, where $r$ is the Cohen-Macaulay type of $k[S]$ and $\mathfrak{m}$ is the maximal homogeneous ideal of $k[S]$. Such a nice relation between $\mu(\mathrm{Der}_k(k[S]_{\mathfrak{m}}))$ and Cohen-Macaulay type of $k[S]$ does not hold in general for the affine semigroups (see \cite[Remark 3.6]{tamone}).

We now assume that $n_0,n_1,\ldots,n_p$ is an arithmetic sequence of positive integers i.e., for a fixed positive integer $d$, $n_i = n_0 + id$ for $i\in[0,p]$, such that $\mathrm{gcd}(n_0,n_1,\ldots,n_p) = 1$. Also assume that the sequence 
$n_0,n_1,\ldots,n_p$ forms a minimal generating set for a numerical semigroup, and we say 
that $n_0,n_1,\ldots,n_p$ is a minimal arithmetic sequence. Now define 
\[
S = \langle (0,n_p), (n_0,n_p-n_0), (n_1,n_p-n_1),\ldots, (n_{p-1},n_p-n_{p-1}), (n_p,0) \rangle ,
\]
an affine semigroup in $\mathbb{N}^2$. We will denote the natural projections to the first and second components of $S$ by $S_1$ and $S_2$ respectively.  These notations will be followed throughout the section.

\medskip
 From the \cite[Corollary 3.2]{bermejo-isabel}, we know that $k[S]$ is Cohen-Macaulay. The following Proposition gives a nice relation between $\mu(\mathrm{Der}_k(k[S]_{\mathfrak{m}}))$ and Cohen-Macaulay type of $k[S]$. 

\begin{proposition}\label{mu=type+3}
Let $S = \langle (0,n_p), (n_0,n_p-n_0), (n_1,n_p-n_1),\ldots, (n_{p-1},n_p-n_{p-1}), (n_p,0) \rangle$ be an affine semigroup in $\mathbb{N}^2$, where $n_0,n_1,\ldots,n_p$ is a minimal arithmetic sequence of positive integers such that $\mathrm{gcd}(n_0,n_1,\ldots,n_p) = 1$. Then $\mu(\mathrm{Der}_k(k[S]_{\mathfrak{m}}))= r+3$, where $r$ is the Cohen-Macaulay type of $k[S]$ and $\mathfrak{m}$ is the maximal homogeneous ideal of $k[S]$.
\end{proposition}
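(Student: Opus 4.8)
The plan is to read a generating set of $\mathrm{Der}_k(k[S])$ off Theorem \ref{genderk}, count it, prove it is minimal, and match the count with $r$. Writing $n_i = n_0 + id$ with $\gcd(n_0,d)=1$, the two projections are $S_1 = \langle n_0,n_1,\ldots,n_p\rangle$ and $S_2 = \langle n_p, n_p-n_0, \ldots, n_p - n_{p-1}\rangle$. Since $n_p - n_i = (p-i)d$, the second projection simplifies to $S_2 = \langle d, 2d, \ldots, pd, n_p\rangle = \langle d, n_p\rangle$, a numerical semigroup on two coprime integers. First I would use this to pin down $|D_1|$: if $d \geq 2$ then $S_2 \neq \mathbb{N}$ is symmetric, so $|\mathrm{PF}(S_2)| = 1$ and case (1) gives $|D_1|=1$; if $d=1$ then $S_2 = \mathbb{N}$ and case (2) gives a single derivation, so again $|D_1| = 1$. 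On the other side, minimality of the arithmetic sequence forces $n_0 \geq 2$ for $p\geq 1$, hence $1 \notin S_1$ and $S_1 \neq \mathbb{N}$; case (3) then puts $D_2$ in bijection with $\mathrm{PF}(S_1)$, so $|D_2| = \mathrm{type}(S_1)$. Thus the generating set $\{t\frac{\partial}{\partial t}, u\frac{\partial}{\partial u}\} \cup D_1 \cup D_2$ has $\mathrm{type}(S_1)+3$ elements.

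The second step, and the first real obstacle, is to prove this generating set is minimal, i.e. that its image is a $k$-basis of $\mathrm{Der}_k(k[S])/\mathfrak{m}\,\mathrm{Der}_k(k[S])$. Here I would exploit the fine $\mathbb{Z}^2$-grading carried by $\mathrm{Der}_k(k[S])$: the operator $t^a u^b \frac{\partial}{\partial t}$ has multidegree $(a-1,b)$ and $t^a u^b \frac{\partial}{\partial u}$ has multidegree $(a, b-1)$, so the two Euler operators both sit in degree $(0,0)$, the single element of $D_1$ sits in one $\frac{\partial}{\partial u}$-degree, and the $\mathrm{type}(S_1)$ elements of $D_2$ sit in pairwise distinct degrees $(\delta,\gamma_\delta)$, $\delta \in \mathrm{PF}(S_1)$, distinct because their $t$-exponents $\delta$ are distinct. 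Minimality then reduces to showing no listed generator lies in $(\mathfrak{m}\,\mathrm{Der})_{\mathbf v}$ and that the two degree-$(0,0)$ operators are independent there. The crucial point is that a $D_2$-generator of degree $(\delta,\gamma_\delta)$ cannot be written as $\mathbf{t}^{g}\cdot D'$ for a generator $g$ of $S$ and a lower derivation $D'$: any such factorization would produce a $\frac{\partial}{\partial t}$-derivation of strictly smaller $t$- or $u$-exponent still satisfying the membership conditions of Theorem \ref{genderk}(3), contradicting either the minimality of $\gamma$ or the fact that $\delta \in \mathrm{PF}(S_1)$ is not in $S_1$. The same ``least exponent'' bookkeeping handles $D_1$ and shows $(\mathfrak{m}\,\mathrm{Der})_{(0,0)} = 0$, so $t\frac{\partial}{\partial t}, u\frac{\partial}{\partial u}$ survive independently. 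This gives $\mu(\mathrm{Der}_k(k[S]_{\mathfrak m})) = \mathrm{type}(S_1)+3$.

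The final step is to identify the Cohen--Macaulay type $r$ of $k[S]$ with $\mathrm{type}(S_1)$. Since $\{\mathbf{t}^{(0,n_p)}, \mathbf{t}^{(n_p,0)}\}$ is a homogeneous system of parameters on the two-dimensional Cohen--Macaulay ring $k[S]$, one has $r = \dim_k \mathrm{Soc}\bigl(k[S]/(\mathbf{t}^{(0,n_p)}, \mathbf{t}^{(n_p,0)})\bigr)$, which in semigroup terms counts the maximal elements, under $\leq_S$, of $\{s \in S : s-(0,n_p)\notin S,\ s-(n_p,0)\notin S\}$. I would compute this count directly from the explicit shape of $S$ for an arithmetic sequence and show it equals the number of maximal elements of the Ap\'{e}ry set $\mathrm{Ap}(S_1, n_0)$, namely $\mathrm{type}(S_1)$; equivalently, one may note that the chart $x_0 \neq 0$ of the projective curve has affine coordinate ring $k[S_1]$ and use Cohen--Macaulayness to transport the type. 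Combining with the previous step yields $\mu(\mathrm{Der}_k(k[S]_{\mathfrak m})) = r+3$. I expect the main difficulty to lie in this last identification together with the multidegree bookkeeping in the minimality argument, since both require controlling factorizations in the two-dimensional semigroup $S$ rather than in the simpler numerical semigroups $S_1$ and $S_2$.
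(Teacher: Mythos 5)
Your skeleton is the same as the paper's: compute $S_2=\langle d,n_p\rangle$, read $|D_1|=1$ and $|D_2|=\mathrm{type}(S_1)$ off Theorem \ref{genderk}, count $\mathrm{type}(S_1)+3$, and then identify $\mathrm{type}(S_1)$ with the Cohen--Macaulay type $r$ of $k[S]$. The real difference is that the paper disposes of every nontrivial step by citation --- the count together with minimality, $\mu(\mathrm{Der}_k(k[S]_{\mathfrak m}))=r_1+r_2+2$, is exactly \cite[Corollary 3.5]{tamone}; the identification $r=r_1$ is \cite[Corollary 4.2]{pranjal-sengupta2}; and $r_2=1$ is the symmetry of the two-generated semigroup from \cite[Proposition 2.13]{numerical} --- whereas you propose to re-derive the first and third of these from scratch. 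What your route buys is self-containedness; what it costs is that the two steps you yourself flag as the main difficulties are left as sketches. In the minimality argument, a homogeneous element of $\mathfrak m\,\mathrm{Der}_k(k[S])$ in multidegree $\mathbf v$ is a \emph{sum} of terms $\mathbf t^{s}D'$ with $s\in S\setminus\{0\}$, so you must exclude all such decompositions, and for each one you must handle the case where the complementary derivation $D'$ is a semigroup multiple of the Euler operator (i.e.\ $\delta-s_1\in S_1$) separately from the case where it would contradict $\delta\in\mathrm{PF}(S_1)$ or the minimality of $\gamma$; this is essentially re-proving the Molinelli--Tamone corollary. Likewise the socle computation identifying $r$ with the number of maximal elements of the Ap\'ery set of $S_1$ is precisely the content of the cited result and is asserted rather than carried out. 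So nothing in your plan is wrong, and your preliminary reductions (coprimality of $d$ and $n_p$, hence $S_2$ symmetric of type $1$; $n_0\geq 2$ from minimality of the sequence, hence $S_1\neq\mathbb N$) match the paper exactly; but as written the proof is complete only modulo supplying full arguments for the two facts the paper imports from the literature.
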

\begin{proof}
Since $n_0 < n_2 < \ldots < n_p $ is an arithmetic sequence of positive integers, then for a fixed positive integer $d$, we have $n_i = n_0 + id$ for $i\in[0,p]$. Since $S_1$ and $S_2$ are the numerical semigroups corresponding the natural projections to the first and second components of $S$ respectively, we have $S_1 = \langle n_0,n_1,\ldots,n_p \rangle$ and $S_2 = \langle d, n_p\rangle$. For $i=1,2$, let $r_i$ be the Cohen-Macaulay type of $S_i$. By \cite[Corollary 4.2]{pranjal-sengupta2}, we have $r_1 = r$ and by \cite[Proposition 2.13]{numerical}, we get $r_2=1$. Therefore by \cite[Corollary 3.5]{tamone}, we have 
\[
\mu(\mathrm{Der}_k(k[S]_{\mathfrak{m}})) = r_1+r_2+2 = r+3. \qedhere
\]
\end{proof}

\begin{coro}\label{mu}
With the assumptions of Proposition \ref{mu=type+3} and $n_0 = ap+b$, $0 \leq b < p$, we have
 \[\mu(\mathrm{Der}_k(k[S]_{\mathfrak{m}}))  = \begin{cases} 
 4 & \text{if} \quad p=1;\\
 p+2 & \text{if} \quad p \geq 2, b =0; \\
 p+3 & \text{if} \quad p \geq 2, b =1; \\
 b+2 &  \text{if} \quad p \geq 2, 1 < b < p.
       \end{cases}
    \]
\end{coro}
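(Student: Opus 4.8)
The plan is to read everything off Proposition~\ref{mu=type+3}, which already supplies $\mu(\mathrm{Der}_k(k[S]_{\mathfrak{m}})) = r+3$, where $r$ is the Cohen--Macaulay type of $k[S]$; and, as recorded in the proof of that proposition, $r = r_1$ is the type of the numerical semigroup $S_1 = \langle n_0, n_1, \ldots, n_p\rangle$ generated by the minimal arithmetic sequence. So the corollary reduces to the purely numerical assertion that, writing $n_0 = ap+b$ with $0 \le b < p$, the type $r_1$ equals $1$ when $p=1$, and equals $p-1$, $p$, $b-1$ according as $b=0$, $b=1$, $1<b<p$ when $p\ge 2$; adding $3$ then gives the four displayed values. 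The case $p=1$ is immediate, since there $S_1 = \langle n_0, n_0+d\rangle$ is generated by two coprime integers, hence is symmetric with $r_1 = 1$ and $\mu = 4$; so I would assume $p \ge 2$ throughout the main argument.

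For $p \ge 2$ I would compute $r_1$ as the number of maximal elements of the Apéry set $\mathrm{Ap}(S_1, n_0)$ under the order $\le_{S_1}$. Writing any element of $S_1$ as $L n_0 + N d$ with $L = \sum \lambda_i$ and $N = \sum i \lambda_i$, the admissible pairs are exactly those with $0 \le N \le pL$, so the least element in each residue class modulo $n_0$ is obtained by taking $L = \lceil N/p\rceil$, yielding the explicit description
\[
\mathrm{Ap}(S_1, n_0) = \Big\{\, w_N := \lceil N/p\rceil\, n_0 + N d \ \Big|\ 0 \le N \le n_0 - 1 \,\Big\}.
\]
Grouping the indices $N$ by the value $q = \lceil N/p\rceil$, the last (``top'') group consists of those $N$ with $q = q_{\max} = \lceil (n_0-1)/p\rceil$, and a direct count shows that it has exactly $p-1$, $p$, or $b-1$ elements in the cases $b=0$, $b=1$, $1<b<p$ respectively. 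It then remains to show that the maximal elements of $\mathrm{Ap}(S_1,n_0)$ are precisely the members of this top group: one direction checks, via the criterion $f \in \mathrm{PF}(S_1) \iff f + n_i \in S_1$ for every generator $n_i$, that each top-group $w_N$ produces the pseudo-Frobenius number $w_N - n_0$ (equivalently $w_N + id \in S_1$ for $1 \le i \le p$, where one uses a length-$(q_{\max}+d)$ representation when the naive length-$q_{\max}$ one fails); the other direction exhibits, for each lower $w_N$, a top-group $w_{N'}$ with $w_{N'} - w_N \in S_1$, so that $w_N$ is dominated. Counting the maximal elements gives $r_1$, and $\mu = r_1 + 3$ finishes the proof.

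The main obstacle is exactly this last domination/maximality analysis: verifying that moving an element up to the top group by adding copies of $d$ stays inside $S_1$, and tracking the boundary behaviour that separates the three residue cases. Indeed, it is precisely the truncation of the top group when $b=0$, together with its shift in length (from $q_{\max}=a$ to $q_{\max}=a+1$) as one passes from $b=1$ to $b \ge 2$, that produces the split formula; and the same degeneration of the grouping when $p=1$ is what forces that case to be handled separately as a two-generated symmetric semigroup.
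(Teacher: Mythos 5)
Your reduction is the same as the paper's --- both rest on Proposition~\ref{mu=type+3} (so $\mu = r+3$ with $r=r_1$ the Cohen--Macaulay type of $S_1$) together with a separate treatment of $p=1$ --- but you diverge on the one substantive input. The paper simply quotes Numata's theorem \cite{numata2} for the type of a numerical semigroup generated by a minimal arithmetic sequence, whereas you propose to re-derive that formula from the Ap\'ery set description $\mathrm{Ap}(S_1,n_0)=\{\lceil N/p\rceil n_0+Nd \mid 0\le N\le n_0-1\}$ and a count of its $\le_{S_1}$-maximal elements. Your Ap\'ery set formula and the counts of the top group ($p-1$, $p$, $b-1$ in the cases $b=0$, $b=1$, $1<b<p$) are correct, and the two verifications you flag as the main obstacle do go through: a lower $w_N$ is dominated by $w_{N+p}=w_N+n_p$ whenever $N+p\le n_0-1$, the one boundary element $w_{(a-1)p}$ in the case $b=0$ being dominated instead by adding $n_j$; and $w_N+id\in S_1$ for top-group $N$ via the representation with length $q_{\max}+d$ when $N+i$ exceeds $p\,q_{\max}$. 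So your route is correct but costs a page of semigroup bookkeeping that the paper outsources to a citation; what it buys is a self-contained argument and, as a byproduct, the explicit list of pseudo-Frobenius numbers of $S_1$, which the paper needs anyway (and again takes from Numata) in the proof of Theorem~\ref{Der(p+1)}. Your handling of $p=1$ (two-generated, hence symmetric, $r_1=1$, giving $\mu=4$) is a slightly cleaner substitute for the paper's appeal to Proposition~\ref{2-generated}, and you correctly identify why $p=1$ cannot be absorbed into the $b=0$ branch of the general formula.
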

\begin{proof}
Since the case $p=1$ reduces to the Theorem \ref{2-generated}, we have $\mu(\mathrm{Der}_k(k[S]_{\mathfrak{m}}))= 4$ if $p=1$. We now assume that $p \geq 2$. Therefore by \cite[Theorem 3.1]{numata2}, the Cohen-macaulay type of $S_1$ is
\[r_1 = \begin{cases} 
 p-1 & \text{if} \quad  b =0; \\
 p & \text{if} \quad  b =1; \\
 b-1 &  \text{if} \quad 1 < b < p.
       \end{cases}
    \]
 Now, the result follows from Proposition \ref{mu=type+3}.  
\end{proof}

\begin{lemma}\label{homogeneousAperyset}
Let $S_1$ be the natural projection to the first component of $S$. Then the set $\mathrm{Ap}(S_1,n_p)$ is a homogeneous subset of $S_1$.
\end{lemma}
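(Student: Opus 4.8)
The plan is to show that $\mathrm{Ap}(S_1, n_p)$ is homogeneous by proving that each of its elements has a unique factorization in $S_1 = \langle n_0, n_1, \ldots, n_p \rangle$, which forces $\mathcal{L}(s)$ to be a singleton. Recall that the Ap\'ery set with respect to $n_p$ consists of the smallest element in each residue class modulo $n_p$; equivalently, $s \in \mathrm{Ap}(S_1, n_p)$ precisely when $s - n_p \notin S_1$. The first step is to use the arithmetic structure $n_i = n_0 + id$ together with the known description of the Ap\'ery set for numerical semigroups generated by an arithmetic sequence. For such semigroups there is a clean normal form: every element of $S_1$ can be written as $\lambda_0 n_0 + (\text{a single block of consecutive-step generators})$, and the Ap\'ery set elements are exactly those that use the generator $n_p$ with coefficient zero and satisfy a bound on the remaining coefficients.

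Concretely, I would first establish a canonical representation for elements of $\mathrm{Ap}(S_1, n_p)$. Writing a general factorization as $s = \sum_{i=0}^{p}\lambda_i n_i$, the relation $n_i = n_0 + id$ means that the value $s$ together with the total length $\sum \lambda_i$ determines the ``weighted'' sum $\sum i \lambda_i$, since $s = (\sum \lambda_i) n_0 + d\bigl(\sum i \lambda_i\bigr)$. Thus two factorizations of the same $s$ that also have the same length automatically agree in the quantity $\sum i\lambda_i$. The key reduction is therefore to show that every element of $\mathrm{Ap}(S_1,n_p)$ has a \emph{single} length, i.e. $\mathcal{L}(s)$ is a singleton; homogeneity is then immediate from the definition. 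To force a single length, I would argue that any element $s$ with two distinct lengths must admit a factorization that can be shortened using $n_p$, i.e. one can trade a combination of smaller generators for a copy of $n_p$ plus a lower generator, producing an expression in which $s - n_p \in S_1$ and contradicting $s \in \mathrm{Ap}(S_1, n_p)$.

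The main mechanism here is the fundamental trading relation in an arithmetic sequence: since $n_{i-1} + n_{i+1} = 2n_i$ and more generally $n_i + n_j = n_{i'} + n_{j'}$ whenever $i+j = i'+j'$, any two factorizations of equal length are connected by moves that preserve both length and value, and the shortest factorization (minimal length) is obtained by pushing coefficient mass to the extreme generators $n_0$ and $n_p$. Crucially, if an element of the Ap\'ery set used $n_p$, subtracting $n_p$ would keep it in $S_1$, so Ap\'ery elements avoid $n_p$ entirely; I would then show that an element expressible without $n_p$ in two different lengths necessarily has a factorization with $\sum_{i\ge 1}\lambda_i \ge p$ among the middle generators, and that $p$ such units can be consolidated into one $n_p$ (plus adjustment by $n_0$), again yielding $s - n_p \in S_1$. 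This is the step I expect to be the main obstacle: pinning down exactly when the middle coefficients are large enough to manufacture an $n_p$, and verifying that the consolidation stays inside $S_1$ without overshooting. I would organize this by first treating the canonical (lexicographically greedy) factorization, bounding its middle coefficients by $p-1$ using the Ap\'ery condition, and then showing any second factorization of the same value must coincide with it, which gives uniqueness of factorization and hence of length.

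Once unique factorization is established for each nonzero $s \in \mathrm{Ap}(S_1, n_p)$, the conclusion that $\mathcal{L}(s)$ is a singleton is immediate, and therefore $\mathrm{Ap}(S_1, n_p)$ is homogeneous by definition. The arithmetic-sequence hypothesis and the minimality of the generating set are what make the coefficient bounds sharp, so I would invoke those explicitly; the characteristic-free and purely combinatorial nature of the argument means no results beyond the Ap\'ery-set description and the trading relations are needed.
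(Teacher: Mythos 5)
Your plan has a genuine error at its core: you propose to prove homogeneity by establishing \emph{unique factorization} for every element of $\mathrm{Ap}(S_1,n_p)$, and you return to this at the end (``any second factorization of the same value must coincide with it''). That statement is false. Homogeneity only requires that $\mathcal{L}(s)$ be a singleton, and Ap\'ery elements here typically admit many distinct factorizations of the same length. The paper's own example after Theorem \ref{Der(p+1)} exhibits this: for $S_1=\langle 11,13,15,17,19,21,23\rangle$ the element $\delta_3+n_6=52$ lies in $\mathrm{Ap}(S_1,23)$ (since $52-23=29\in\mathrm{PF}(S_1)$) and has five factorizations, $4\cdot 13=11+2\cdot 13+15=2\cdot 11+2\cdot 15=2\cdot 11+13+17=3\cdot 11+19$, all of length $4$. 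So the final step of your argument cannot be carried out, and only the weaker ``single length'' claim is both true and needed.

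The middle portion of your sketch (two distinct lengths force $s-n_p\in S_1$) is the right idea for a direct proof, but as written it is not correct either. The quantity that controls whether you can consolidate into a copy of $n_p$ is the weighted sum $\sum_i i\lambda_i$, not the count $\sum_{i\ge 1}\lambda_i$ of middle generators: writing $s=Ln_0+d\sum_i i\lambda_i$, one has $s-n_p=(L-1)n_0+d\bigl(\sum_i i\lambda_i-p\bigr)\in S_1$ precisely when $\sum_i i\lambda_i\ge p$, and your criterion $\sum_{i\ge 1}\lambda_i\ge p$ is neither implied by having two lengths nor the condition actually used. Moreover, you never say where the hypothesis that the sequence is \emph{minimal} (equivalently $n_0>p$) enters; it is exactly what guarantees that the shorter of two factorizations has $\sum_i i\lambda_i'\ge n_0>p$, since two lengths $L>L'$ give $(L-L')n_0=d\bigl(\sum_i i\lambda_i'-\sum_i i\lambda_i\bigr)$ and $\gcd(n_0,d)=1$ forces $d\mid L-L'$. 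With those repairs your route would yield a self-contained elementary proof, which is genuinely different from the paper's: the paper simply quotes Patil's minimal generating set of the defining ideal of $k[S_1]$, in which every non-homogeneous binomial has a term divisible by $x_p$, and applies the homogeneity criterion of Jafari--Zarzuela for Ap\'ery sets. As submitted, however, your proposal both aims at a false statement and leaves the key consolidation step unjustified.
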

\begin{proof}
Define the map $\phi : k[x_0,\ldots,x_p] \longrightarrow k[t]$ such that $x_i \rightarrow t^{n_i}$ for $0 \leq i \leq p$. Then, $k[S_1] \cong \frac{k[x_0,\ldots,x_p]}{\mathrm{ker}(\phi)}$. From \cite{patil2}, we have a minimal generating set (say $B$) of $\mathrm{Ker}(\phi)$ such that one term of each non-homogeneous element of $B$ is divisible by $x_p$. Hence, the result follows from \cite[Proposition 3.9]{jafarihomnumsemigroup}.
\end{proof}

\begin{lemma}\label{equalsum}
Let $S_1$ be the natural projection to the first component of $S$ and $s \in \mathrm{Ap}(S_1,n_p)$. If $s = \sum_{i=0}^p \lambda_i n_i = \sum_{i=0}^p \lambda_i'n_i$ has two expressions in $S$, then
\[
\sum_{i=0}^{p} \lambda_i (p-i)d = \sum_{i=0}^{p} \lambda_i' (p-i)d.
\]
\end{lemma}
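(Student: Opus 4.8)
The plan is to derive this purely from the homogeneity of the Apéry set together with the arithmetic structure $n_i = n_0 + id$; no new machinery is needed beyond the preceding lemma. First I would invoke Lemma \ref{homogeneousAperyset}, which tells us that $\mathrm{Ap}(S_1,n_p)$ is a homogeneous subset of $S_1$. By the definition of a homogeneous set, this means $\mathcal{L}(s)$ is a singleton for every nonzero $s$ in the set, so any two factorizations of $s$ have the same total length. Applying this to the two given expressions yields
\[
\sum_{i=0}^{p} \lambda_i = \sum_{i=0}^{p} \lambda_i'.
\]
(If $s = 0$ both sides of the desired identity vanish, so we may assume $s \neq 0$.)

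Next I would exploit the arithmetic progression. Writing $n_i = n_0 + id$ and substituting into both factorizations of $s$ gives
\[
s = n_0\!\sum_{i=0}^{p}\lambda_i + d\!\sum_{i=0}^{p} i\lambda_i = n_0\!\sum_{i=0}^{p}\lambda_i' + d\!\sum_{i=0}^{p} i\lambda_i'.
\]
Since the two length sums agree by the previous step, the $n_0$-terms cancel, and because $d > 0$ we may divide through to obtain $\sum_{i=0}^{p} i\lambda_i = \sum_{i=0}^{p} i\lambda_i'$. Thus both the length and the ``weighted length'' $\sum i\lambda_i$ are invariants of the factorization of $s$.

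Finally I would assemble the target quantity. Expanding $(p-i)d = pd - id$ gives
\[
\sum_{i=0}^{p}\lambda_i(p-i)d = d\,p\!\sum_{i=0}^{p}\lambda_i - d\!\sum_{i=0}^{p} i\lambda_i,
\]
and the analogous identity holds for the $\lambda_i'$. Since the two factorizations share both $\sum\lambda_i$ and $\sum i\lambda_i$, the right-hand sides coincide, which is exactly the claimed equality. The conceptual content of the lemma is entirely carried by Lemma \ref{homogeneousAperyset}: the only genuine input is that membership in $\mathrm{Ap}(S_1,n_p)$ forces a unique factorization length, and the rest is the linear-algebra consequence of the progression $n_i = n_0 + id$. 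Accordingly I expect no real obstacle here; the one point to state carefully is the degenerate case $s=0$ and the fact that $d \neq 0$ permits the cancellation.
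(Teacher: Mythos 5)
Your proof is correct and follows essentially the same route as the paper's: invoke Lemma \ref{homogeneousAperyset} to equate the lengths $\sum\lambda_i=\sum\lambda_i'$, substitute $n_i=n_0+id$ to cancel the $n_0$-terms, and rearrange to get the weighted sum identity. The only differences are cosmetic (you explicitly note the $s=0$ case and the division by $d$, which the paper leaves implicit).
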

\begin{proof}
Since
\[
s = \sum_{i=0}^p \lambda_i n_i = \sum_{i=0}^p \lambda_i'n_i,
\]
we get
\[
\sum_{i=0}^p \lambda_i n_0 + \sum_{i=0}^p \lambda_i (id) = \sum_{i=0}^p \lambda_i' n_0 + \sum_{i=0}^p \lambda_i' (id).
\]
By Lemma \ref{homogeneousAperyset}, we have $\sum_{i=0}^p \lambda_i  = \sum_{i=0}^p \lambda_i'$. Thus, we have 
\[
\sum_{i=0}^p \lambda_i (pd) - \sum_{i=0}^p \lambda_i (id) = \sum_{i=0}^p \lambda_i' (pd) - \sum_{i=0}^p \lambda_i' (id).
\]
Therefore, we get 
\[
\sum_{i=0}^{p} \lambda_i (p-i)d = \sum_{i=0}^{p} \lambda_i' (p-i)d.
\]

\end{proof}

\begin{theorem}\label{Der(p+1)}
Suppose $p \geq 2$. Let $S = \langle (0,n_p), (n_0,n_p-n_0), (n_1,n_p-n_1),\ldots, (n_{p-1},n_p-n_{p-1}), (n_p,0) \rangle$ be an affine semigroup in $\mathbb{N}^2$, where $n_0,n_1,\ldots,n_p$ is a minimal arithmetic sequence of positive integers, i.e., for $i \in [1,p]$, $n_i = n_0 + id$ for some positive integer $d$, and $\mathrm{gcd}(n_0,n_1,\ldots,n_p) = 1$. Write $n_0 = ap + b$, $0 \leq b < p$, then we have the following:
\begin{enumerate}
\item If $b = 0$, then the derivation module $\mathrm{Der}_k(k[S])$ is mimimally generated by 
\[ 
\left\lbrace  u \frac{\partial}{\partial u}, t^{an_p+d}u^{(d-1)(n_p-1)}\frac{\partial}{\partial u} , t \frac{\partial}{\partial t}, t^{an_p - n_{p-i}+1}u^{d(n_p - i)}\frac{\partial}{\partial t} \mid 1 \leq i  \leq p-1 \right\rbrace.
\]

\item If $b = 1$, then the derivation module $\mathrm{Der}_k(k[S])$ is mimimally generated by 
\[ 
\left\lbrace  u \frac{\partial}{\partial u}, t^{an_p+d}u^{(d-1)(n_p-1)}\frac{\partial}{\partial u} , t \frac{\partial}{\partial t}, t^{an_p - n_{p-i}+1}u^{d(n_p - i)}\frac{\partial}{\partial t} \mid 1 \leq i  \leq p \right\rbrace.
\]

\item If $b \neq 0,1$, then the derivation module $\mathrm{Der}_k(k[S])$ is mimimally generated by
\[
\left\lbrace  u \frac{\partial}{\partial u}, t^{(a+1)n_p+d}u^{(d-1)(n_p-1)}\frac{\partial}{\partial u} , t \frac{\partial}{\partial t}, t^{an_p + id +1}u^{d(n_p - i)}\frac{\partial}{\partial t} \mid 1 \leq i  \leq b-1 \right\rbrace.
\]
\end{enumerate}
\end{theorem}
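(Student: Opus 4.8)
The plan is to feed everything through Theorem \ref{genderk}. Since $k[S]$ is Cohen--Macaulay by \cite[Corollary 3.2]{bermejo-isabel}, that theorem presents $\mathrm{Der}_k(k[S])$ as $D_1\cup\{u\,\partial/\partial u\}\cup D_2\cup\{t\,\partial/\partial t\}$, and once I produce these sets explicitly, Proposition \ref{mu=type+3} (which gives $\mu=r+3=1+1+r_1+1$) guarantees that the resulting list is \emph{minimal}, because it will consist of exactly $r+3$ elements in distinct multidegrees. So the entire task reduces to computing, for each pseudo-Frobenius number, the least auxiliary exponent prescribed by Theorem \ref{genderk}: the integer $\beta$ attached to the single element of $\mathrm{PF}(S_2)$, and the integer $\gamma$ attached to each element of $\mathrm{PF}(S_1)$.

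The structural observation I would record first is that every generator of $S$ has coordinate-sum exactly $n_p$, since $(n_j,n_p-n_j)$, $(0,n_p)$ and $(n_p,0)$ all sum to $n_p$. Hence $S$ is graded by $(x,y)\mapsto (x+y)/n_p$, and a point $(x,y)\in\mathbb{Z}^2_{\ge 0}$ lies in $S$ if and only if $x\in S_1$, $n_p\mid x+y$, and $(x+y)/n_p\ge \ell_{\min}(x)$, where $\ell_{\min}(x)$ is the least length of a factorization of $x$ in $S_1=\langle n_0,\dots,n_p\rangle$ (adjoining copies of $(0,n_p)$ raises the second coordinate by multiples of $n_p$ without changing the first). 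This criterion is the engine: it converts each requirement $(\delta,\gamma)+(n,n_p-n)\in S$ into two conditions that are \emph{independent of $n$}, namely a congruence $n_p\mid\delta+\gamma$ fixing $\gamma$ modulo $n_p$, together with one length inequality $(\delta+\gamma)/n_p+1\ge M$, where $M=\max_{n}\ell_{\min}(\delta+n)$. The least admissible value is therefore $\gamma=(M-1)n_p-\delta$, and the analogous run with the roles of the coordinates exchanged yields $\beta$.

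Next I would pin down the two sets of pseudo-Frobenius numbers. For $S_2=\langle d,n_p\rangle$ (using $n_p-n_i=(p-i)d$ and $\gcd(d,n_p)=1$) this is immediate: $\mathrm{PF}(S_2)=\{dn_p-d-n_p\}$, so $u^{\alpha+1}=u^{(d-1)(n_p-1)}$, which already accounts for the shape of the $\partial/\partial u$ generator; running the criterion in the second coordinate then produces the exponent $an_p+d$ (resp.\ $(a+1)n_p+d$). For the arithmetic semigroup $S_1$ I would use the explicit description of $\mathrm{PF}(S_1)$ for minimal arithmetic sequences: its three shapes according to $b=0$, $b=1$, $1<b<p$ are exactly what force the three cases of the statement, with $\delta$ ranging over $an_p-n_{p-i}$ (resp.\ $an_p+id$) and the cardinality $r_1$ matching Corollary \ref{mu}.

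The genuine computation, and the step I expect to be the main obstacle, is the length value $M=\max_n\ell_{\min}(\delta+n)=a+d$ for each such $\delta$, since this requires controlling minimal factorization lengths inside $S_1$. Here Lemmas \ref{homogeneousAperyset} and \ref{equalsum} do the work: because $\delta\notin S_1$ while $\delta+n_p\in S_1$, the element $\delta+n_p$ lies in $\mathrm{Ap}(S_1,n_p)$, whose homogeneity forces it to have a \emph{unique} factorization length; identifying that length as $a+d$ simultaneously supplies the lower bound (no shorter factorization exists, so $\gamma$ cannot be decreased) and singles out $n=n_p$ as the binding constraint, while for every other $n\in\{n_0,\dots,n_{p-1}\}$ one need only exhibit one factorization of $\delta+n$ of length $\le a+d$ to confirm admissibility. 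Substituting $M=a+d$ into $\gamma=(M-1)n_p-\delta$ and using $n_0+pd=n_p$ collapses the expression to $\gamma=d(n_p-i)$, matching the claim; the risk lies entirely in the case-by-case verification of the short factorizations and the attendant off-by-one bookkeeping in $b$.
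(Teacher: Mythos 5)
Your overall strategy is the same as the paper's: reduce everything to Theorem \ref{genderk}, read off $\mathrm{PF}(S_2)=\{(d-1)n_p-d\}$ and the three shapes of $\mathrm{PF}(S_1)$ from Numata, get minimality of the resulting list for free from the count in Proposition \ref{mu=type+3}, and pin down the least admissible $\beta$ and $\gamma_i$ by exploiting the homogeneity of $\mathrm{Ap}(S_1,n_p)$ (Lemmas \ref{homogeneousAperyset} and \ref{equalsum}). Your membership criterion --- $(x,y)\in S$ iff $x\in S_1$, $n_p\mid x+y$ and $(x+y)/n_p\ge\ell_{\min}(x)$ --- is correct and is an equivalent, arguably cleaner, repackaging of what the paper does via Cohen--Macaulayness, $(S_1\times S_2)\cap G(S)$ and \cite[Lemma 4.1]{cavaliereNiesi}; it has the small advantage of not needing the CM hypothesis for the admissibility check.

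The one concrete error is in the step you yourself single out as the crux: the asserted value $M=\max_n\ell_{\min}(\delta+n)=a+d$ holds only when $b\in\{0,1\}$. For $1<b<p$ one has $\delta_i=an_p+id$ and $\delta_i+n_p=(a+d)n_0+n_{p+i-b}$, a factorization of length $a+d+1$, and homogeneity of $\mathrm{Ap}(S_1,n_p)$ forces $M=a+d+1$ there. With your $M=a+d$ the formula $\gamma_i=(M-1)n_p-\delta_i$ would give $(d-1)n_p-id$ rather than the correct $d(n_p-i)$, so case (3) of the statement would not come out; the same case split is needed on the $\beta$ side, where the target is $an_p+d$ for $b\in\{0,1\}$ but $(a+1)n_p+d$ for $b\ne 0,1$. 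This is exactly the ``off-by-one bookkeeping in $b$'' you flagged, and it is fixable, but as written the computation in the third case does not close; you also still owe the explicit short factorizations of $\delta_i+n$ for $n\ne n_p$ (and of $\alpha+n_p-n$) that certify admissibility, which the paper supplies.
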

\begin{proof}
Let $S_1$ and $S_2$ be the numerical semigroups corresponding the natural projections to the first and second components of $S$ respectively. Then we have $\mathrm{PF}(S_2) = \{(d-1)n_p - d\}$. Also by \cite[Theorem 3.1]{numata2}, we can write the following formulas for $\mathrm{PF}(S_1)$.

If $b=0$, then
\begin{align*}
\mathrm{PF}(S_1) & = \left\lbrace an_0 + \ell d - n_0 \mid (a-1)p + 1 \leq \ell \leq ap-1 \right\rbrace \\
&= \left\lbrace (a-1)n_p + id \mid 1 \leq i \leq p-1 \right\rbrace \\
&= \left\lbrace an_p - n_{p-i} \mid 1 \leq i \leq p-1 \right\rbrace .
\end{align*}

If $b = 1$, then
\begin{align*}
\mathrm{PF}(S_1) & = \left\lbrace an_0 + \ell d - n_0 \mid (a-1)p + 1 \leq \ell \leq ap \right\rbrace \\
&= \left\lbrace an_p - n_{p-i} \mid 1 \leq i \leq p \right\rbrace .
\end{align*}

If $b \neq 0,1$, then
\begin{align*}
\mathrm{PF}(S_1) & = \left\lbrace (a+1)n_0 + \ell d - n_0 \mid ap + 1 \leq \ell \leq ap + b -1  \right\rbrace \\
&= \left\lbrace an_p + id \mid 1 \leq i \leq b-1 \right\rbrace .
\end{align*}

Now set $\beta = \begin{cases} 
 an_p + d & \text{if} \quad b = 0,1\\
 (a+1)n_p + d & \text{if} \quad b \neq 0,1 
       \end{cases}$, \, and \, $\alpha = (d-1)n_p-d$.\\

\vspace{.1cm}
Also set, for $i \in I$,
\begin{center}
$\delta_i = \begin{cases} 
 an_p - n_{p-i} & \text{if} \quad b = 0,1\\
 an_p + id & \text{if} \quad b \neq 0,1 
       \end{cases}$, \, and \, $\gamma_i = d(n_p-i)$,
\end{center}
where
\begin{center}
$I = \begin{cases}
[1,p-1] & \text{if} \quad b = 0;\\
[1,p] & \text{if} \quad b = 1;\\
[1,b-1] & \text{if} \quad b \neq 0,1.
\end{cases}$
\end{center}
Since $k[S]$ is Cohen-Macaulay, to prove $(\beta, \alpha) + (n,n_p - n) \in S$, 
for each $n \in \{0,n_0, \ldots, n_{p-1}\}$ and $(\delta_i,\gamma_i) + (n,n_p - n) \in S$, for each $i \in I$, $n \in \{n_0,n_1, \ldots , n_p\}$, is equivalent to prove that $(\beta, \alpha) + (n,n_p - n) \in (S_1 \times S_2) \cap G(S)$, for each $n \in \{0,n_0, \ldots, n_{p-1}\}$ and $(\delta_i,\gamma_i) + (n,n_p - n) \in (S_1 \times S_2) \cap G(S)$, for each $i \in I$, $n \in \{n_0,n_1, \ldots , n_p\}$, where $G(S)$ is the group generated by $S$ in $\mathbb{Z}^2$. 

Now, observe that $\beta \in S_1$ and since $\alpha = F(S_2)$, we have $\alpha + n_p-n \in S_2$, 
for each $n \in \{0,n_0, \ldots, n_{p-1}\}$. Therefore, we have $(\beta, \alpha) + (n,n_p - n) \in S_1  \times S_2$, 
for each $n \in \{0,n_0, \ldots, n_{p-1}\}$. Also, if $b = 0,1$, we have 
\begin{align*}
\beta + \alpha + n + n_p - n = an_p + d +(d-1)n_p -d + n_p = (a+d)n_p,
\end{align*}
and if $b \neq 0,1$, we have
\begin{align*}
\beta + \alpha + n + n_p - n = (a+1)n_p + d +(d-1)n_p -d + n_p = (a+d+1)n_p.
\end{align*}
Therefore by \cite[Lemma 4.1]{cavaliereNiesi}, we have  $(\beta, \gamma) + (n,n_p-n) \in G(S)$, for each $n \in \{0,n_0,\ldots,n_{p-1}\}$.
Now, since $\delta_i \in \mathrm{PF}(S_1)$, we have $\delta_i + n \in S_1$, for all $i,n$. Also, we have 
\begin{align*}
\gamma_i + n_p - n = d(n_p-i) + n_p - n &= (d-1)n_p - d + n_{p - (i-1)} + n_p -n \\
& = F(S_2) + n_{p - (i-1)} + n_p -n.
\end{align*}
Therefore, we have $(\delta_i,\gamma_i) + (n,n_p - n) \in (S_1 \times S_2)$, for each 
$i \in I$, $n \in \{n_0,n_1, \ldots , n_p\}$. Also, if $b = 0,1$, we have
\begin{align*}
\delta_i + \gamma_i + n + n_p -n = an_p - n_{p-i} + d(n_p- i) + n_p = (a+d)n_p,
\end{align*}
and if $b \neq 0,1$, we have
\begin{align*}
\delta_i + \gamma_i + n + n_p -n = an_p + id + d(n_p - i) + n_p = (a+d+1)n_p.
\end{align*}
Therefore by \cite[Lemma 4.1]{cavaliereNiesi}, we have $(\delta_i,\gamma_i) + (n,n_p - n) \in G(S)$ for each $i \in I$, $n \in \{n_0,n_1, \ldots , n_p\}$. 

To complete the proof, it is sufficient to prove that $\beta$ and $\gamma_i$'s are least positive integers such that $(\beta, \alpha) + (n,n_p - n) \in S$, 
for each $n \in \{0,n_0, \ldots, n_{p-1}\}$ and $(\delta_i,\gamma_i) + (n,n_p - n) \in S$, for each $i \in I$, $n \in \{n_0,n_1, \ldots , n_p\}$. Suppose $b \neq 0,1$, we have 
$$\alpha + n_p = (d-1)n_p - d +n_p = dn_p - d = d(n_0+pd)-d = (a+d)pd + (b-1)d.$$
If there exist $\beta$ such that $(\beta,\alpha)+ (0,n_p) \in S$, then we get
\begin{align*}
\beta \geq (a+d)n_0 + n_0 + (p-b+1)d & \geq an_0 + dn_0 + n_p - (b-1)d\\
& \geq an_0 + dap + n_p + d \\
& \geq (a+1)n_p + d.
\end{align*}
Thus, if $b \neq 0,1$ then $\beta = (a+1)n_p + d$ is minimal satisfying the required properties. Now, suppose $b \in {0,1}$, then we have
\[
\alpha + n_p = (d-1)n_p - d +n_p = (a+d-1)pd + (p-1)d \quad \text{if} \quad b=0,
\]
and 
\[
\alpha + n_p = (d-1)n_p - d +n_p = (a+d)pd \quad \text{if} \quad b=1.
\]
If there exist $\beta$ such that $(\beta,\alpha)+ (0,n_p) \in S$, then we get
\[
\beta \geq (a+d-1)n_0 + n_0 + d  \geq an_0 + dn_0 + d  \geq a(n_0+pd) + d \quad \text{if} \quad b=0,
\]
and
\[
\beta \geq (a+d)n_0   \geq an_0 + d(ap+1)  \geq a(n_0+pd) + d \quad \text{if} \quad b=1.
\]
Thus, if $b \in \{0,1\}$ then $\beta = an_p + d$ is minimal satisfying the required properties.
Now, since we have $\delta_i \in \mathrm{PF}(S_1)$ for all $i\in I$, then $\delta_i + n_p \in \mathrm{Ap}(S_1,n_p)$ for all $i \in I$. Suppose $b\in \{0,1\}$, we have
\begin{align*}
\delta_i + n_p = an_p - n_{p-i} + n_p &= a(n_0+pd)-n_0-(p-i)d + n_0 + pd\\
&=(a+d-1)n_0 + apd + id + n_0 - n_0d\\
&= (a+d-1)n_0 + id +n_0 -bd\\
&= (a+d-1)n_0 + n_{i-b}.
\end{align*}
If there exist $\gamma_i$ such that $(\delta_i,\gamma_i)+ (n_p,0) \in S$, then by Lemma \ref{equalsum}, we get
\begin{align*}
\gamma_i \geq (a+d-1)pd + pd - (i-b)d & \geq (a+d)pd-id + bd \\
& \geq(a+d)pd - id + n_0d - apd\\
& \geq d(n_0 + pd) -id.
\end{align*}
Thus, if $b \in \{0,1\}$ then $\gamma_i = d(n_p-i)$ is minimal satisfying the required properties. Now, suppose $b \neq 0,1$. Therefore, we have 
\begin{align*}
\delta_i + n_p = an_p +id + n_p &= a(n_0+pd) +id + n_0 + pd\\
&=(a+d)n_0 + n_0 + id -(n_0-ap)d + pd \\
&= (a+d)n_0 + n_0 + (p+i-b)d\\
&= (a+d)n_0 + n_{p+i-b}.
\end{align*}
If there exist $\gamma_i$ such that $(\delta_i,\gamma_i)+ (n_p,0) \in S$, then by Lemma \ref{equalsum}, we get
\begin{align*}
\gamma_i \geq (a+d)pd + pd - (p+i-b)d & \geq (a+d)pd-id + bd \\
& \geq(a+d)pd - id + n_0d - apd\\
& \geq d(n_0 + pd) -id.
\end{align*}
Thus, if $b \neq 0,1$, then also $\gamma_i = d(n_p-i)$ is minimal satisfying the required properties. This completes the proof.
\end{proof}

\begin{example}{\rm 
Let $S = \langle (0,23), (11,12), (13,10), (15,8), (17,6), (19,4), (21,2), (23,0) \rangle$, then $S_1 = \langle 11, 13, 15, 17, 19, 21, 23 \rangle$ and $S_2 = \langle 2, 23 \rangle$. Here we have $n_0 = 11$ and $p = 6$, therefore we get $a = 1$, $b = 5$.  Therefore, we have $\mathrm{PF}(S_1) = \{25,27,29,31\}$ and $\mathrm{PF}(S_2) = \{21\}$. In the notation of proof of Theorem \ref{Der(p+1)}, we have  $\delta_1 = 25$, $\delta_2 = 27$, $\delta_3 = 29$, $\delta_4 = 31$. Note that

\[
\delta_1 + n_6 = 2 \cdot 11 + 2 \cdot 13 = 3 \cdot 11 + 15,
\]
\[
\delta_2 + n_6 = 11 + 3 \cdot 13 = 2 \cdot 11 + 13 + 15 = 3 \cdot 11 + 17,
\]
\[
\delta_3 + n_6 = 4 \cdot 13 = 11 + 2 \cdot 13 + 15 = 2 \cdot 11 + 2 \cdot 15 = 2 \cdot 11 + 13 + 17 = 3 \cdot 11 + 19,
\]
\[
\delta_4 + n_6 = 3 \cdot 13 + 15 = 11 + 13 + 2 \cdot 15 = 11 + 2 \cdot 13 + 17 = 2 \cdot 11 + 15 + 17 = 2 \cdot 11 + 13 + 19 = 3 \cdot 11 + 21,
\]

\medskip
\noindent
are the only factorizations of $\delta_1 + n_6,\delta_2 + n_6,\delta_3 + n_6,\delta_4 + n_6$ respectively. Also note that $\mathcal{L}(\delta_i + n_6) = 4$ for all $i \in [1,4]$. Therefore the minimal choices for $\gamma_i$'s such that $(\delta_i, \gamma_i) + (n_d,0) \in S$ are 

\[
\gamma_1 = 2 \cdot 12 + 2 \cdot 10 \quad \text{or} \quad 3 \cdot 12 + 8,
\]
\[
\gamma_2 = 12 + 3 \cdot 10 \quad \text{or} \quad 2 \cdot 12 + 10 + 8 \quad \text{or} \quad 3 \cdot 12 + 6,
\]
\[
\gamma_3 = 4 \cdot 10 \quad \text{or} \quad 12 + 2 \cdot 10 + 8 \quad \text{or} \quad 2 \cdot 12 + 2 \cdot 8 \quad \text{or} \quad 2 \cdot 12 + 10 + 6 \quad \text{or} \quad 3 \cdot 12 + 4,
\]
\[
\gamma_4 = 3 \cdot 10 + 8 \quad \text{or} \quad 12 + 10 + 2 \cdot 8 \quad \text{or} \quad 12 + 2 \cdot 10 + 6 \quad \text{or} \quad 2 \cdot 12 + 8 + 6 \quad \text{or} \quad 2 \cdot 12 + 10 + 4 \quad \text{or} \quad 3 \cdot 12 + 2.
\]

\medskip
In each case, we have $\gamma_1 = 44$, $\gamma_2 = 42$, $\gamma_3 = 40$ and $\gamma_4 = 38$. Further, we observe that these $\gamma_i$'s satisfy the condition $(\delta_i,\gamma_i) + (n,n_6 - n) \in S$, for each $i \in [1,4]$, $n \in \{n_0,n_1, \ldots , n_6\}$.
Now, since $\alpha = 21$, we have $\alpha+n_6 = 44$. Observe that $48$ is the smallest natural number such that $(48,\alpha)+(0,n_6) = (48,44) \in S$. Also, observe that $\beta = 48$ satisfies the property that $(\beta, \alpha) + (n, n_6-n) \in S$ for all $n \in \{n_0,n_1, \ldots , n_6\}$. Therefore, the set 
\[ 
\left\lbrace  u \frac{\partial}{\partial u}, t^{48}u^{22}\frac{\partial}{\partial u} , t \frac{\partial}{\partial t}, t^{26}u^{44}\frac{\partial}{\partial t},  t^{28}u^{42}\frac{\partial}{\partial t},  t^{30}u^{40}\frac{\partial}{\partial t},  t^{32}u^{38}\frac{\partial}{\partial t} \right\rbrace 
\]
forms a minimal generating set for $\mathrm{Der}_k(k[S])$.
}
\end{example}

\section{Hilbert-Kunz multiplicity}

Let $R$ be a $d$-dimensional graded $k$-algebra, with homogeneous maximal ideal $\mathfrak{m}$. Let $M$ be a finite $R$-module and $q = \langle x_1,x_2,\ldots,x_s \rangle$ be a homogeneous $\mathfrak{m}$-primary ideal of $R$, then the Hilbert-Kunz multiplicity is defined by 
\[
e_{\mathrm{HK}}(q,M) = \lim_{n \to \infty}\frac{\ell_R(M/q^{[n]}M)}{n^d},
\]
where $q^{[n]}= \langle x_1^n, x_2^n, \ldots, x_s^n \rangle$. In general, it is not clear that this quantity is well defined. If $\mathrm{char}(k) = p > 0$, then for $n = p^e$, $\lim_{e \to \infty}\frac{\ell_R(M/q^{[p^e]}M)}{p^{ed}}$ always exists (see \cite{monsky}). If $q = \mathfrak{m}$, then we denote $e_{\mathrm{HK}}(\mathfrak{m},R)$ by $e_{\mathrm{HK}}(R)$. In this section, we give an explicit formula for the Hilbert-Kunz multiplicity of the co-ordinate ring of the projective monomial curve defined by $n_1,\ldots,n_p$, 
such that $n_1 < n_2 < \cdots < n_p$ and $\mathrm{gcd}(n_1,n_2, \ldots,n_p) = 1$.

\begin{theorem}\label{Hilbert-KunzMulti}
Let $S = \langle (0,n_p), (n_1,n_p-n_1), (n_2,n_p-n_2),\ldots, (n_{p-1},n_p-n_{p-1}), (n_p,0) \rangle$ be an affine semigroup in $\mathbb{N}^2$, where $n_1 < n_2 < \cdots < n_p$ and $\mathrm{gcd}(n_1,n_2, \ldots,n_p) = 1$. Put $n_0 = 0$. Then the Hilbert-Kunz multiplicity of $k[S]$ is 
\[
e_{\mathrm{HK}}\left( k[S] \right) = 1 + \frac{1}{n_p}\left( \sum_{r=1}^p (n_r-1)(n_r-n_{r-1}) \right) .
\]
\end{theorem}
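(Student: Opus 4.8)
The plan is to reduce the statement to counting lattice points and then to an area computation. Since $\mathcal{C}$ is a projective curve, $R := k[S]$ is a two-dimensional standard graded ring: every generator $a_i = (n_i, n_p - n_i)$ (with $n_0 = 0$, $i = 0,\dots,p$) lies on the line $x + y = n_p$, and $a_0 = (0,n_p)$, $a_p=(n_p,0)$ span a rank-two group, so in the definition of $e_{\mathrm{HK}}$ we have $d = 2$. Writing $\mathfrak{m} = (\mathbf{t}^{a_0}, \ldots, \mathbf{t}^{a_p})$, one has $\mathfrak{m}^{[n]} = (\mathbf{t}^{na_0}, \ldots, \mathbf{t}^{na_p})$, and this monomial ideal is spanned over $k$ by $\{\mathbf{t}^s : s \in \bigcup_{i=0}^p (na_i + S)\}$. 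Hence $R/\mathfrak{m}^{[n]}$ has $k$-basis $\{\mathbf{t}^s : s \in A_n\}$, where
\[
A_n := S \setminus \bigcup_{i=0}^p (na_i + S),
\]
so that $\ell_R(R/\mathfrak{m}^{[n]}) = \# A_n$. In particular the computation is purely combinatorial, which is exactly why the answer is independent of $\mathrm{char}(k)$: the Monsky limit collapses to counting the finite sets $A_n$.

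Next I would pass to the cone and rescale. Let $C = \mathbb{R}_{\ge 0}^2$ and
\[
\Omega := C \setminus \bigcup_{i=0}^p (a_i + C) = \{(x,y) \in C : \text{for every } i,\ x < n_i \text{ or } y < n_p - n_i\}.
\]
Because the corners $a_i$ are collinear and run from $(0,n_p)$ to $(n_p,0)$, the region $\Omega$ is bounded (contained in $[0,n_p)^2$). The relevant lattice is $L := \{(x,y) \in \mathbb{Z}^2 : n_p \mid x+y\}$, a sublattice of index $n_p$, hence of covolume $n_p$ and density $1/n_p$; here $S \subseteq \widehat{S} := L \cap \mathbb{Z}^2_{\ge 0}$, and at each level the defect $\widehat{S}\setminus S$ is bounded by the numbers of gaps of the numerical semigroups $S_1, S_2$. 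Since $na_i + \widehat{S} = \widehat{S} \cap (na_i + C)$, replacing $S$ by $\widehat{S}$ turns $A_n$ into $\widehat{S} \cap n\Omega$, and an Ehrhart-type estimate gives $\#(\widehat{S} \cap n\Omega) = \tfrac{1}{n_p}\mathrm{vol}(n\Omega) + O(n) = \tfrac{n^2}{n_p}\mathrm{vol}(\Omega) + O(n)$, whence
\[
e_{\mathrm{HK}}(k[S]) = \lim_{n \to \infty} \frac{\# A_n}{n^2} = \frac{1}{n_p}\,\mathrm{vol}(\Omega).
\]

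It then remains to compute $\mathrm{vol}(\Omega)$ and match it with the stated formula. Decomposing along the intervals $[n_{j-1}, n_j)$ of the first coordinate, for $x \in [n_{j-1}, n_j)$ the binding constraint is $y < n_p - n_{j-1}$, so
\[
\Omega = \bigsqcup_{j=1}^{p} \bigl([n_{j-1}, n_j) \times [0, n_p - n_{j-1})\bigr), \qquad \mathrm{vol}(\Omega) = \sum_{j=1}^{p}(n_j - n_{j-1})(n_p - n_{j-1}).
\]
Writing $n_p - n_{j-1} = (n_j - 1) + (n_p + 1 - n_j - n_{j-1})$ and using the telescoping identities $\sum_j (n_j - n_{j-1}) = n_p$ and $\sum_j (n_j - n_{j-1})(n_j + n_{j-1}) = n_p^2$, one gets $\sum_{j=1}^p (n_j - n_{j-1})(n_p + 1 - n_j - n_{j-1}) = (n_p + 1)n_p - n_p^2 = n_p$, hence
\[
\mathrm{vol}(\Omega) = n_p + \sum_{j=1}^p (n_j - 1)(n_j - n_{j-1}),
\]
which upon dividing by $n_p$ yields exactly $1 + \frac{1}{n_p}\sum_{r=1}^p (n_r - 1)(n_r - n_{r-1})$.

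The main obstacle is the error term in the second step, namely justifying that passing from $S$ to its saturation $\widehat{S}$ (and from $na_i + S$ to $na_i + \widehat{S}$) changes $\#A_n$ by only $O(n)$. This is precisely where the non-normality of $k[S]$ enters: the defect $\widehat{S}\setminus S$ concentrates near the two axes (gaps of $S_1$ and of $S_2$), and one must check that after intersecting with the $O(n)$-sized region and the $p+1$ boundary strips $\{x \approx nn_i\}$, $\{y \approx n(n_p - n_i)\}$, the symmetric difference $A_n \,\triangle\, (\widehat{S} \cap n\Omega)$ has only $O(n)$ elements; this requires a preliminary crude confinement of $A_n$ to a box of side $O(n)$, which I would obtain from $\mathbf{t}^{na_0}, \mathbf{t}^{na_p}$ being a homogeneous system of parameters. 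Once this linear bound is established it is annihilated by the division by $n^2$, and the limit is governed solely by $\mathrm{vol}(\Omega)$.
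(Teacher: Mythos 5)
Your proposal is correct, and the two numerical ingredients you compute are exactly the paper's: your covolume $n_p$ of the lattice $L=G(S)$ is the paper's computation of $\lvert \mathbb{Z}^2/G\rvert = n_p$ via the Cavaliere--Niesi basis $\{(0,n_p),(1,-1)\}$, and your $\mathrm{vol}(\Omega)=\sum_{j=1}^p(n_j-n_{j-1})(n_p-n_{j-1})=n_p+\sum_{j}(n_j-1)(n_j-n_{j-1})$ is the paper's colength $\ell(k[x,y]/J)$ of the monomial ideal $J=\langle x^{n_0}y^{n_p-n_0},\ldots,x^{n_p}y^0\rangle$, obtained there by listing the standard monomials $\mathcal{B}=B\cup\bigcup_r B_r$ (the same staircase region, sliced along the $y$-axis where you slice along the $x$-axis). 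The genuine difference is in how these two numbers are assembled into $e_{\mathrm{HK}}$: the paper simply invokes Eto's result \cite[Corollary 2.3]{etohilbert-kunzmultiplicity}, which states that for a monoid ring $e_{\mathrm{HK}}(k[S])=\ell(k[x,y]/J)/[\mathbb{Z}^2:G]$, whereas you rederive this reduction from scratch by counting $A_n=S\setminus\bigcup_i(na_i+S)$, replacing $S$ by its saturation $\widehat{S}=L\cap\mathbb{Z}^2_{\ge 0}$, and controlling the error. Your route is more self-contained and makes the geometric meaning (volume of the staircase over the covolume of the lattice) transparent, but it puts the entire technical burden on the $O(n)$ estimate for $\#\bigl(A_n\,\triangle\,(L\cap n\Omega)\bigr)$, which you correctly identify as the main obstacle and sketch plausibly: the confinement of $A_n$ to $\{x+y\le 2nn_p+O(1)\}$ via the homogeneous system of parameters $\mathbf{t}^{na_0},\mathbf{t}^{na_p}$, plus the fact that $\widehat{S}\setminus S$ is supported on the (at most one-dimensional) non-normal locus and hence contributes $O(1)$ points per graded level. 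That sketch is right but would need to be written out in full to make the argument complete; the paper avoids it entirely at the cost of a black-box citation.
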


\begin{proof}
Let $G$ be the group generated by $S$ in $\mathbb{Z}^2$. Then $G$ is a free $\mathbb{Z}$-module of rank 2. By \cite[Lemma 4.1]{cavaliereNiesi}, $G$ has a basis $\{(0,n_p), (1,-1)\}$. Let $\{(1,0), (0,1)\}$ be the canonical basis of $\mathbb{Z}^2$ as $\mathbb{Z}$-module. Then we have 
\[
(0,n_p) = 0 \cdot (1,0) + n_p (0,1)\quad \text{and} \quad (1,-1) = (1,0) - (0,1). 
\]
Therefore, the cardinality of $\frac{\mathbb{Z}^2}{G}$ is finite and equal to the modulus of the determinant of the matrix 
$\begin{bmatrix}
0 & n_p \\
1 & -1
\end{bmatrix} .
$
Therefore, we have $\vert \frac{\mathbb{Z}^2}{G} \vert = n_p$.
 
Let $J$ denote the ideal $\langle x^{n_p}, x^{n_1}y^{n_p-n_1}, \ldots , x^{n_{p-1}}y^{n_p - n_{p-1}}, y^{n_p} \rangle$ in $k[x,y]$. Observe that the radical ideal of $J$ is the maximal homogeneous ideal of $k[x,y]$. Therefore, the length of $\frac{k[x,y]}{J}$ is finite and equal to the $\mathrm{dim}_k \frac{k[x,y]}{J}$ as 
a $k$-vector space. Let $\mathcal{B}$ be the basis of $\frac{k[x,y]}{J}$ as $k$-vector space. Then by \cite[Theorem 39.6]{peeva}, observe that $\mathcal{B}= B \cup \bigcup_{r=1}^p B_r$, where $B = \{1,y,y^2, \cdots, y^{n_p-1}\}$ and for $r \in [1,p]$,
\[
B_r = \left\lbrace x^i y^j \mid 1 \leq i \leq n_r-1, \quad n_p-n_r \leq j \leq n_p - n_{r-1} -1   \right\rbrace.
\]

The cardinality of $B$ is $n_p$ and the cardinality of $B_r$ is $(n_r-1)(n_r-n_{r-1})$, 
for each $r \in [1,p]$. Therefore, the length of $\frac{k[x,y]}{J}$ is 
\[
\ell_{k[x,y]}\left(\frac{k[x,y]}{J} \right) = n_p + \left( \sum_{r=1}^p (n_r-1)(n_r-n_{r-1}) \right). 
\]
Now, the result follows from \cite[Corollary 2.3]{etohilbert-kunzmultiplicity}. \qedhere

\end{proof}

\begin{coro}\label{Hilbert-KunzArithmetic}
Let $S = \langle (0,n_p), (n_0,n_p-n_0), (n_1,n_p-n_1),\ldots, (n_{p-1},n_p-n_{p-1}), (n_p,0) \rangle$, where $n_0,n_1,\ldots,n_p$ is a minimal arithmetic sequence of positive integers, 
such that $n_0 < n_2 < \ldots < n_p $ and $\mathrm{gcd}(n_0,n_1,\ldots,n_p) = 1$. Then the Hilbert-Kunz multiplicity of $k[S]$ is 
\[
e_{\mathrm{HK}}\left( k[S] \right) = n_0 + \frac{p(p+1)d^2}{2n_p},
\]
where $d$ is the common difference.
\end{coro}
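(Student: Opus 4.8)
The plan is to derive the closed form in Corollary \ref{Hilbert-KunzArithmetic} as a direct specialization of the general formula in Theorem \ref{Hilbert-KunzMulti}. The main theorem already states that for the affine semigroup with $n_0 = 0$ we have
\[
e_{\mathrm{HK}}(k[S]) = 1 + \frac{1}{n_p}\left( \sum_{r=1}^p (n_r-1)(n_r-n_{r-1}) \right).
\]
In the arithmetic setting we have $n_r = n_0 + rd$ for $r \in [0,p]$, so that $n_r - n_{r-1} = d$ for every $r \in [1,p]$. However, one must be careful: Theorem \ref{Hilbert-KunzMulti} is stated with the convention $n_0 = 0$, whereas here $n_0$ is a genuine positive integer (the first term of the arithmetic sequence). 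The first step, therefore, is to reconcile the indexing. The curve in the corollary is parametrized by the exponents $n_0 < n_1 < \cdots < n_p$, and to apply the theorem I would relabel these as the $p+1$ exponents $m_0 = n_0, m_1 = n_1, \ldots, m_p = n_p$, set $m_{-1} := 0$ in the role of the theorem's $n_0$, and read off the formula with $p$ replaced by $p+1$ terms. Concretely, substituting into the theorem's sum the values $n_r - n_{r-1} = d$ (with the first gap being $n_0 - 0 = n_0$) gives a telescoping-free sum that I expect to evaluate in closed form.

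The key computation is the evaluation of the sum once $n_r - n_{r-1} = d$ is substituted. After accounting for the indexing, the formula reads
\[
e_{\mathrm{HK}}(k[S]) = 1 + \frac{1}{n_p}\left( (n_0 - 1)\,n_0 + \sum_{r=1}^{p} (n_r - 1)\,d \right),
\]
where the $r=0$ term carries the gap $n_0 - n_{-1} = n_0$ and the remaining $p$ terms carry the common gap $d$. Writing $n_r = n_0 + rd$, the surviving sum becomes
\[
\sum_{r=1}^{p} (n_0 + rd - 1)\,d = d\left( p(n_0 - 1) + d\,\frac{p(p+1)}{2} \right),
\]
using the standard identity $\sum_{r=1}^p r = p(p+1)/2$. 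I would then combine this with the $(n_0-1)n_0$ term over the common denominator $n_p = n_0 + pd$ and simplify. The arithmetic here is routine but requires care to confirm that the $n_0$-dependent pieces collapse correctly so that the $1/n_p$ factor cancels against $n_p = n_0 + pd$, leaving the clean expression $n_0 + \frac{p(p+1)d^2}{2n_p}$.

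The main obstacle, and the step most prone to error, is the bookkeeping of the index shift between the two statements rather than any deep argument. Theorem \ref{Hilbert-KunzMulti} uses $p$ generators $n_1, \ldots, n_p$ together with the forced convention $n_0 = 0$, so that its summation runs over $p$ terms with the very first gap equal to $n_1$; by contrast, the corollary's arithmetic sequence begins at a nonzero $n_0$ and contributes an \emph{additional} generator. I must be vigilant that the count of summands and the value of the first difference are transcribed correctly, since an off-by-one error in either would change the final constant. Once the correspondence is fixed, the remaining verification is a purely algebraic simplification: expand, apply the arithmetic-series identity, and confirm the cancellation with $n_p = n_0 + pd$ to recover $e_{\mathrm{HK}}(k[S]) = n_0 + \frac{p(p+1)d^2}{2n_p}$.
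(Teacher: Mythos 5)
Your proposal is correct and follows essentially the same route as the paper: specialize Theorem \ref{Hilbert-KunzMulti} to the arithmetic case, handle the index shift so that the $r=0$ gap contributes $(n_0-1)n_0$ and the remaining $p$ gaps each contribute $(n_r-1)d$, then simplify using $\sum_{r=1}^p r = p(p+1)/2$ and $n_p = n_0+pd$. The algebraic collapse you anticipate does go through, since $(n_0-1)n_0 + pd(n_0-1) = (n_0-1)n_p$ cancels the denominator and yields $n_0 + \frac{p(p+1)d^2}{2n_p}$, exactly as in the paper's computation.
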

\begin{proof}
By Theorem \ref{Hilbert-KunzMulti}, we have 
\begin{align*}
e_{\mathrm{HK}}\left( k[S] \right) = 1 + \frac{1}{n_0+pd} \left(  \sum_{r=1}^p (n_r-1)(n_r-n_{r-1}) \right) + (n_0-1)n_0
&= \frac{n_0^2 + d(n_1+ n_2 + \cdots + n_p)}{n_0+pd} \\
&= \frac{2n_0^2 + 2n_0pd + p(p+1)d^2}{2(n_0+pd)}.
\end{align*}
\end{proof}

\begin{example}{\rm 
Let $A = k[x^3, x^2y, xy^2, y^3]$ be the co-ordinate ring of the twisted cubic curve in the projective space $\mathbb{P}^3$. The affine semigroup parametrizing this curve is $S = \langle (0,3),(1,2),(2,1),(3,0) \rangle$. Therefore, by Corollary \ref{Hilbert-KunzArithmetic}, $e_{\mathrm{HK}}(A)= 2$.
}
\end{example}

\begin{example}{\rm 
Let $S = \langle (0,19), (7,12), (10,9), (13,6), (16,3), (19,0) \rangle$. Therefore, in the notation of Corollary \ref{Hilbert-KunzArithmetic}, we have $n_0 = 7$, $p = 4$ and $d = 3$. Hence $e_{\mathrm{HK}}(k[S]) = 7 + \frac{20 \cdot 9}{2 \cdot 19} = \frac{223}{19} $.
}
\end{example}

\begin{example}{\rm 
Let $A = k[x^4, x^3y, xy^3,y^4]$. Then it will correspond to the semigroup ring of the affine semigroup $S = \langle (0,4),(1,3),(3,1),(4,0) \rangle$. Therefore, by Theorem \ref{Hilbert-KunzMulti}, $e_{\mathrm{HK}}(A)= 1 + \frac{1}{4}\left( 0 \cdot 1 + 2 \cdot 2 + 3 \cdot 1  \right) = \frac{11}{4}$.
}
\end{example}

{\it Acknowledgement.} Experiments with the software GAP\cite{GAP4} package\cite{gap-ns} have provided the initial observations. The authors would like to thank the anonymous referee for valuable comments and suggestions.


\begin{thebibliography}{10}

\bibitem{bermejo-isabel}
Isabel Bermejo, Eva Garc\'{\i}a-Llorente, and Ignacio Garc\'{\i}a-Marco.
\newblock Algebraic invariants of projective monomial curves associated to
  generalized arithmetic sequences.
\newblock {\em J. Symbolic Comput.}, 81:1--19, 2017.

\bibitem{cavaliereNiesi}
Maria~Pia Cavaliere and Gianfranco Niesi.
\newblock On monomial curves and {C}ohen-{M}acaulay type.
\newblock {\em Manuscripta Math.}, 42(2-3):147--159, 1983.


\bibitem{gap-ns}
M.~Delgado, P.~A. Garcia-Sanchez, and J.~Morais.
\newblock {NumericalSgps}, {A} package for numerical semigroups, {V}ersion
  1.3.0.
\newblock \href {https://gap-packages.github.io/numericalsgps}
  {\texttt{https://gap-packages.github.io/} \discretionary {}{}{}
  \texttt{numericalsgps}}, Mar 2022.
\newblock Refereed GAP package.

\bibitem{etohilbert-kunzmultiplicity}
Kazufumi Eto.
\newblock Multiplicity and {H}ilbert-{K}unz multiplicity of monoid rings.
\newblock {\em Tokyo J. Math.}, 25(2):241--245, 2002.


\bibitem{GAP4}
The GAP~Group.
\newblock {\em {GAP -- Groups, Algorithms, and Programming, Version 4.11.1}},
  2021.

\bibitem{kraft}
J\"{u}rgen Kraft.
\newblock Singularity of monomial curves in {${\bf A}^3$} and {G}orenstein
  monomial curves in {${\bf A}^4$}.
\newblock {\em Canad. J. Math.}, 37(5):872--892, 1985.


\bibitem{tamone}
S.~Molinelli and G.~Tamone.
\newblock On the derivations of the homogeneous coordinate ring of a monomial
  curve in {${\bf P}^d_k$}.
\newblock {\em Comm. Algebra}, 20(11):3279--3300, 1992.

\bibitem{tamone2}
Silvia Molinelli and Grazia Tamone.
\newblock On the derivations of semigroup rings.
\newblock {\em Rend. Circ. Mat. Palermo (2)}, 42(3):459--475 (1994), 1993.

\bibitem{monsky}
P.~Monsky.
\newblock The {H}ilbert-{K}unz function.
\newblock {\em Math. Ann.}, 263(1):43--49, 1983.

\bibitem{numata2}
T.~Numata.
\newblock Numerical semigroups generated by arithmetic sequences.
\newblock {\em Proc. Inst. Nat. Sci., Nihon Univ.}, 49:279--287, 2014.

\bibitem{patil2}
D.~P. Patil.
\newblock Minimal sets of generators for the relation ideals of certain monomial curves.
\newblock {\em Manuscripta Math.}, 80(3):239--248, 1993.

\bibitem{patil-sengupta}
D.~P. Patil and I.~Sengupta.
\newblock Minimal set of generators for the derivation module of certain
  monomial curves.
\newblock {\em Comm. Algebra}, 27(11):5619--5631, 1999.

\bibitem{patil-singh}
D.~P. Patil and Balwant Singh.
\newblock Generators for the derivation modules and the relation ideals of
  certain curves.
\newblock {\em Manuscripta Math.}, 68(3):327--335, 1990.

\bibitem{peeva}
Irena Peeva.
\newblock {\em Graded syzygies}, volume~14 of {\em Algebra and Applications}.
\newblock Springer-Verlag London, Ltd., London, 2011.

\bibitem{numerical}
J.~C. Rosales and P.~A. Garc\'{\i}a-S\'{a}nchez.
\newblock {\em Numerical semigroups}, volume~20 of {\em Developments in
  Mathematics}.
\newblock Springer, New York, 2009.

\bibitem{pranjal-sengupta2}
Joydip Saha, Indranath Sengupta, and Pranjal Srivastava.
\newblock Betti sequence of the projective closure of affine monomial curves
\newblock {\em J. Symbolic Comput.}, https://doi.org/10.1016/j.jsc.2023.02.009, 119:101--111, 2023.

\bibitem{seidenberg}
A.~Seidenberg.
\newblock Derivations and integral closure.
\newblock {\em Pacific J. Math.}, 16:167--173, 1966.

\bibitem{jafarihomnumsemigroup}
Jafari, Raheleh and Zarzuela Armengou, Santiago.
\newblock Homogeneous numerical semigroups.
\newblock {\em Semigroup Forum}, 97(2):278--306, 2018.

\bibitem{jonathan}
Jonathan~M. Wahl.
\newblock Derivations of negative weight and nonsmoothability of certain
  singularities.
\newblock {\em Math. Ann.}, 258(4):383--398, 1981/82.

\end{thebibliography}
\end{document}